\documentclass[smallextended]{svjour3}     

\smartqed

\usepackage{graphicx}
\usepackage{fix-cm}
\usepackage
[colorlinks=false,urlcolor=blue,citecolor=blue,linkcolor=blue,bookmarks=true,
     bookmarksopen=false,pdftitle=created by dvipdf,pdfcreator=NM,
     pdfauthor=Megiddo,pdfsubject=mor.sty:6.29.04]{hyperref}


\usepackage{url}
\usepackage{subfigure}
\usepackage{amsfonts,mathrsfs}
\usepackage{amssymb,amsmath}
\usepackage{verbatim}
\usepackage{color}

\newcommand{\remove}[1]{}

\def\la{\langle}
\def\ra{\rangle}

\def\1{{\bf 1}}
\def\A{\mathscr{A}}

\def\A{\mathscr{A}}
\def\F{\mathscr{F}}

\def\ol{\bar}

\def\b{\beta}

\def\g{\gamma}
\def\a{\alpha}

\def\o{\omega}

\def\rn{\mathbb{R}^n}
\def\R{\mathbb{R}}

\def\re{\mathbb{R}}

\def\dist{{\rm dist}}

\newcommand{\EXP}[1]{\mathsf{E}\!\left[#1\right] }
\newcommand{\prob}[1]{\mathsf{Pr}\left\{ #1 \right\}}
\def\be{\begin{equation}}
\def\ee{\end{equation}}
\def\bit{\begin{itemize}}
\def\eit{\end{itemize}}
\def\argmin{\mathop{\rm argmin}}
\newtheorem{assumption}{Assumption}


\begin{document}

\title{Random minibatch subgradient algorithms for convex  problems with functional constraints}

\titlerunning{ Random minibatch subgradient algorithms }

\author{Angelia Nedi\'{c}   \and Ion Necoara}

\institute{
A.~Nedi\'{c} \at
School of Electrical, Computer and Energy Engineering\\
Arizona State University, Tempe, USA  \\
\email{angelia.nedich@asu.edu}.
\and
I. Necoara \at
Department of Automatic Control and Systems Engineering\\
University  Politehnica Bucharest, 060042 Bucharest, Romania\\
\email{ion.necoara@acse.pub.ro} (corresponding author).
}

\date{Received: 28 February 2019 / Accepted: date}

\maketitle

\begin{abstract}
In this paper we consider  non-smooth convex optimization problems with (possibly) infinite intersection of constraints. In contrast to the classical approach, where the constraints are usually represented as intersection of simple sets, which are easy to project onto, in this paper we consider that each constraint set is  given as the level set of a convex but not necessarily differentiable function. For these settings we propose  subgradient iterative algorithms with random minibatch feasibility updates. At each iteration, our algorithms take a subgradient step aimed at only minimizing the objective function and then a subsequent subgradient step  minimizing the feasibility violation of the observed minibatch of constraints. The feasibility updates are performed based on either  parallel or sequential random observations of several  constraint components.  We analyze the convergence behavior of the proposed algorithms for the case when the objective function is  strongly convex and with bounded subgradients, while the functional constraints are endowed with a bounded first-order black-box oracle. For a diminishing stepsize, we prove sublinear convergence rates   for the expected distances of the weighted averages of the iterates from the constraint set, as well as for the expected suboptimality of the function values along the weighted averages.  Our convergence  rates  are known to be optimal for subgradient  methods on  this class of problems. Moreover,  the rates depend explicitly on the minibatch size and show when minibatching helps a subgradient scheme with random feasibility updates.

\keywords{Convex minimization \and functional constraints \and subgradient algorithms \and random minibatch projection algorithms \and convergence rates.}
\end{abstract}


\section{Introduction}
\label{sec:introduction}
\noindent The large sum of functions in the objective function and/or the large  number of constraints in  most of the practical optimization applications  led the stochastic optimization field to become an essential tool for many applied mathematics areas, such as machine learning and statistics \cite{MouBac:11,Vap:98}, constrained control  \cite{PatNec:17}, sensor networks  \cite{BlaHer:06}, computer science \cite{KunBac:18}, inverse problems \cite{BotHei:12},  operations research and finance \cite{RocUry:00}.  For example, in machine learning  applications the optimization algorithms involve numerical computation of parameters for a system designed to make decisions based on yet unseen data \cite{MouBac:11,Vap:98}. In particular, in support vector machines  one maps the data into a higher dimensional input space  and constructs an optimal separating  hyperplane  in  this  space by learning, eventually online, the hyperplanes corresponding to each data in the training set \cite{Vap:98}. This   leads to a convex optimization problem with a large number of functional constraints.

\vspace{0.2cm}

\noindent \textit{Contributions}. To deal with such  optimization  problems having (possibly) infinite number of functional constraints, we propose  subgradient  methods with random feasibility updates.  At each iteration, the algorithms take a subgradient step aimed at only minimizing the objective function, followed by a feasibility step  for  minimizing the feasibility violation of the observed minibatch of convex constraints achieved through the Polyak's subgradient iteration \cite{Pol:67,Pol:01}.  The feasibility updates in the  first algorithm are performed using  \textit{parallel}  random observations of several  constraint components, while in the second algorithm we consider   \textit{sequential} random observations of constraints. Both algorithms are reminiscent of a learning process where we try to learn the constraint set  while simultaneously minimizing an objective function.  The proposed algorithms are applicable to the situation where the whole constraint set of the problem is not known in advance, but it is rather learned in time through observations. Also,
these algorithms are of interest for (non-smooth) constrained optimization problems where the constraints are known but their number  is either large or not finite.

\vspace{0.1cm}

\noindent We study the convergence properties of the proposed random minibatch  subgradient   algorithms for the case  when the objective function need not be differentiable but it is  strongly convex, while the functional constraints are accessed trough a bounded first-order black-box oracle.  In doing so, we can avoid the need for projections to the set of  constraints, which may be  expensive computationally.  For a diminishing stepsize, we prove sublinear convergence rates of order ${\cal O}(1/t)$, where $t$ is the iteration counter,  for the expected distances of the weighted averages of the iterates from the constraint set, as well as for the expected suboptimality of the function values along the weighted averages.  Our convergence  rates  are known to be optimal for this class of subgradient schemes  for solving non-smooth convex problems with functional constraints.  Moreover, our rates  depend explicitly on the minibatch size  and show when minibatching works for a subgradient method with random feasibility updates.  \textit{To the best of our knowledge, this is the first work proving that subgradient methods with random minibatch  feasibility steps are better than their non-minibatch  variants}.  More explicitly,  the convergence estimate for the  parallel algorithm depends on a key parameter $L_N$ (see eq. \eqref{eq:Lnk} below), which determines whether  minibatching helps ($L_N <1$) or not ($L_N=1$) and how much (the smaller $L_N$, the better is the complexity), see Theorem 2. For the sequential variant, we show that minibatching always helps and the complexity depends exponentially  on the minibatch size (see Theorem 3).

\vspace{0.2cm}

\noindent \textit{Related works}.   In spite of its wide applicability, the study on efficient solution methods for optimization problems with many constraints  is still limited.  The most prominent work is the stochastic gradient descent (SGD) \cite{MouBac:11,NemJud:09,PolJud:92}. Even though SGD is a well-developed  methodology, it only applies to optimization problems with simple constraints, requiring the whole feasible set  to be  projectable.  A line of work that is known as alternating projections, focuses  on applying random projections for solving problems that are involving the intersection of a (infinite) number of sets. The case when the objective function is not present in the formulation,  which corresponds to the convex feasibility problem,  is studied e.g. in~\cite{BauBor:96,KeyZho:16,Ned:10,NecRic:18}. For this particular setting,  \cite{Ned:10,NecRic:18} combines the smoothing technique with (minibatch) SGD, leading to  stochastic alternating projection algorithms having linear convergence rates. In~\cite{PatNec:17} stochastic proximal point type steps are combined  with alternating projections for solving stochastic optimization problems with infinite intersection of sets.  Stochastic forward-backward algorithms have been also applied to solve optimization problems with many constraints. However, the papers introducing those general algorithms focus on proving only assymptotic convergence results and do not derive convergence rates, or they assume the number of constraints is finite, which is more restricted than our settings ~\cite{BiaHac:17,SheTeb:14,WanChe:15}. In the case where the number of constraints  is finite and the objective function is deterministic, Nesterov's smoothing framework is studied in~\cite{BotHen:13,OuyGra:12,TraFer:18} in the setting of accelerated proximal gradient methods.   Incremental subgradient  methods or primal-dual approaches were also proposed for solving  convex optimization problems with finite intersection of simple sets through an exact penalty reformulation  in ~\cite{Ber:11,KunBac:18}. 

\vspace{0.1cm}

\noindent The paper most related to our work is \cite{Ned:11}, see also \cite{Pol:67,Pol:69,Pol:01},  where iterative  subgradient methods with random feasibility steps are  proposed for solving convex problems with functional constraints.  Our algorithms are minibatch extensions of the algorithm proposed in~\cite{Ned:11}.   Moreover,  in~\cite{Ned:11} only sublinear convergence rates of order ${\cal O}(1/\sqrt{t})$ are established  for convex objective functions,  while in this paper  we show that ${\cal O}(1/t)$ rates are valid under a relaxed strong convexity condition.  Finally, since we deal with minibatching and  a relaxed strong convexity assumption, our convergence analysis  requires additional insights that differ from that of~\cite{Ned:11}.   Similarly, in~\cite{PatNec:17} a  stochastic optimization problem with infinite intersection of sets is considered and  stochastic proximal point steps are combined  with alternating projections for solving it.  However,  in order to prove sublinear convergence rates  ${\cal O}(1/t)$, \cite{PatNec:17} requires   strongly convex and smooth objective functions,  while our results are valid for a more relaxed  strong convexity condition and possible non-smooth fuctions. Lastly,~\cite{PatNec:17} assumes the projectability of individual sets, whereas in our case, the constraints might not be projectable.

\vspace{0.2cm}

\noindent \textit{Notation}. The inner product of two vectors $x$ and $y$  in  $\R^n$ is denoted by $\la x,y\ra$,  while $\|x\|$ denotes the standard Euclidean norm. We write $\dist(\ol x, X)$ for the distance of a vector $\ol x$  from a closed convex set $X$, i.e., $\dist(\ol x,X) =\min_{x\in X}\|x-\ol x\|$, while $\Pi_X[\ol x]$ denotes the projection of  $\ol x$ onto $X$, i.e., $\Pi_X[\ol x]=\argmin_{x\in X}\|x-\ol x\|^2$. For a scalar $a$, we write  $a^+=\max\{a, 0\}$. For a convex function $h$, we denote $s_h(x)$  a subgradient of $h$ at  $x$ and $\partial h(x)$ denote the set of all subgradients of $h$ at $x$. If $h$ is differentiable at $x$, then its gradient is denoted $\nabla h(x)$.  We write $\prob{\o}$ and $\EXP{\o}$ to denote respectively  the probability distribution and the expectation of a random variable~$\o$.   Finally,  the big $\mathcal{O}$ notation, i.e.  $f(t) \leq \mathcal{O}(g(t))$,  means that there exist $C>0 \; \text{and} \; t_0$  \text{such that}  $f(t) \leq C \cdot g(t)$ for all $t \geq t_0$.   

\vspace{0.2cm}

\noindent  \textit{Outline}. The content of the paper is as follows. In Section  \ref{sec:problem} we introduce our problem of interest and the main assumptions. In Section  \ref{sec:parallel} we propose a parallel random minibatch subgradient algorithm and derive its convergence rate, while in Section \ref{sec:convergence_diff_seq} the sequential variant is analyzed.  Finally, in Section \ref{sec:extensions} we discuss some extensions, while in Section \ref{sec:sim} we report some preliminary numerical results.


\subsection{Problem formulation}
\label{sec:problem}
In this paper we are interested in solving the following convex constrained minimization problem:
\begin{eqnarray}
\label{eq-problem}
\hbox{minimize\ \,}&& f(x)\cr
\hbox{subject to} && x\in X,
\quad X\triangleq Y\cap\left(\cap_{\o \in \A} X_\o)\right),\\
\hbox{with \,}\qquad
&& X_\o=\{x\in\re^n\mid g_\o(x) \le 0\} \qquad \hbox{for every $ \o \in \A$}, \nonumber
\end{eqnarray}
where $\A$ is an arbitrary collection of indices and $Y$ is a closed convex set. The objective function $f$ and all constraint functions $g_\o$ are assumed convex. We also assume that the optimization problem \eqref{eq-problem} has finite optimum and we let  $f^*$ and $X^*$ denote the optimal value and the optimal set, respectively,
 \[f^*=\inf_{x  \in X}f(x),\qquad X^*=\{x\in X\mid f(x)=f^*\}.\]
We work under the premise that the collection $\A$ is large, possibly infinite (even uncountable).   Such problems have many applications in engineering, machine learning, computer science,  operations research and  finance  \cite{MouBac:11,Vap:98,PatNec:17,BotHei:12,RocUry:00}.  Let us now  formally state the assumptions on the functions $f$ and $g_\o$,  with $\o \in\A$, of problem \eqref{eq-problem}.


\begin{assumption}\label{asum-base}
Let the following hold:
\begin{itemize}
\item[(a)] The set $Y$ is closed, convex and simple (i.e., easy for projection). The constraint set $X$ and the optimal set $X^*$ are non-empty. 
\item[(b)] The objective function $f: \re^n \to \bar{\re}$ is  strongly convex on the set $Y$  with a  constant $\mu>0$, i.e.:
\[ f(y)  \ge f(x)  + \la s_f(x), y - x \ra  +  \frac{\mu}{2} \|y - x\|^2 \quad \forall x, y \in Y, \; s_f(x) \in \partial f(x).  \]
The subgradients of the function $f$ are uniformly bounded on the set $Y$, i.e.,  there is $M_f>0$ such that
\[  \|s_f(x)\|\le M_f \quad \forall  s_f(x) \in \partial f(x) \; \hbox{and} \;  x \in Y. \]
\item[(c)]  The functional constraints $g_\o: \re^n \to \bar{\re}$ are convex, not necessarily differentiable, and have bounded subgradients on the set $Y$, i.e., there is $M_g>0$ such that
\[ \|d\| \le M_g \quad  \forall d \in \partial g_\o(x),  \; x \in Y \; \hbox{and } \;  \o \in \A.  \]
\end{itemize}
\end{assumption}

\noindent  We assume,  that the domains of definition of the functions $f$ and $g_\o$ contain $Y$.    It follows immediately from Assumption~\ref{asum-base}(b) that (see e.g.,  \cite{NecNes:15}):
\begin{align*}
f(x) - f^* \geq \frac{\mu}{2} \|x - x^*\|^2  \quad  \forall x \in X \subseteq Y.
\end{align*} 
Note that the conditions of Assumption~\ref{asum-base}(b) may look contradictory since the following relations need to hold:
\[   \frac{\mu}{2} \|  x - x^*\|^2 \leq f(x) - f^* \leq \la s_f(x), x - x^* \ra \leq M_f \| x - x^*\| \quad \forall x \in X, x^* \!\in\! X^*\!,  \]
where the second inequality follows from the convexity of $f$ and the third one from the Cauchy-Schwartz inequality.  This implies that $\|x - x^*\| \leq 2 M_f/\mu$ for any  $ x \in X  \subseteq  Y$. Note that this inequality is always valid provided that the set  $Y$ is compact and our optimization model \eqref{eq-problem} allows us to impose such an assumption on the set $Y$.  Moreover,  when the sets $X_\o$ are simple for projection  operation, then one may choose an alternative equivalent description of the constraint sets by letting $g_\o(x) = \dist (x,X_\o)$ for all $x \in \re^n$.  Note that in this case  $d(x) = \frac{x - \Pi_{X_\o}[x]}{\dist (x,X_\o)}  \in \partial g_\o(x)$ for all $x \not \in X_\o$. Furthermore, $\|d(x)\| =1$, thus the subgradients are bounded with $M_g=1$ in this case. Therefore, our approach is more general than those from most of the existing works,  which usually assume projectability of each $X_\o$  (see also \textit{Related works} paragraph from Section \ref{sec:introduction}).


\section{Parallel random minibatch subgradient  algorithm}
\label{sec:parallel}
To solve the convex problem with functional constraints~\eqref{eq-problem}, we first propose a subgradient method with \textit{parallel} random minibatch feasibility updates. More precisely,  our first algorithm is a parallel minibatch extension of the algorithm  proposed in~\cite{Ned:11}, leading to the following iterative process:
\begin{center}
\framebox{
\parbox{11 cm}{
\begin{center}
\textbf{ Algorithm (parallel case) }
\end{center}
Choose $x^0 \in Y$, minibatch size $N \ge 1$, and 
stepsizes $\alpha_k >0$ and $\b > 0$. For $k \geq 1$ repeat:
\begin{enumerate}
\item Draw $N$  samples $J_k=\{\o^k_1, \cdots, \o^k_N\} \sim \textbf{P}$.
\item Compute the following updates: 
\begin{subequations}
\label{eq-method}
\begin{eqnarray}
&& v_k = \Pi_Y[x_{k-1} - \a_{k-1} s_f(x_{k-1})],
\label{eq-grad-update}\\
&& z_{k}^i= v_{k}- \beta\, \frac{g^+_{\o_{k}^{i}}(v_{k})}{\|d_{k}^i\|^2}\, d_{k}^{i} 
\quad \hbox{for } i=1:N,
\label{eq-batch}\\
&& x_k=\Pi_Y[ \bar z_k], \quad \hbox{with } \bar z_k =\frac{1}{N}\sum_{i=1}^N z_{k}^{i}.\label{eq-feas-update}
\end{eqnarray}
\end{subequations}
\end{enumerate}
}}
\end{center}

\noindent Here,  $\a_k>0$ and $\beta>0$ are deterministic stepsizes and recall that $s_f(x)$ denotes a subgradient of $f$ at $x$ and $g_\o^+(x) = \max \{g_\o(x), 0\}$.  The method takes one subgradient step for the objective function, followed by $N$ feasibility updates in \textit{parallel}, which are then averaged  and projected onto the set $Y$.   In a parallel implementation, we assume  available $N+1$  cores collocated on the same machine, of which one is designated as a central core; the central core sends $v_k$  to all other cores, which perform the update \eqref{eq-batch}  and  send their updates to the central core; finally the central core  performs the average step \eqref{eq-feas-update} and the optimality step \eqref{eq-grad-update}.  We note that at each of the feasibility update step $N$ random constraints are selected from the collection of the constraint sets according to the probability distribution $\textbf{P}$, i.e., the index variable $\o_{k}^{i}$ is random with values in the set $\A$. The vector $d_{k}^{i}$ is chosen as $d_{k}^{i} \in \partial g^+_{\o_{k}^{i}}(v_{k})$ if $g^+_{\o_{k}^{i}}(v_{k})>0$ and $d_{k}^{i}=d$
 for some $d \ne 0$ if $g^+_{\o_{k}^{i}}(v_{k})=0$. When  $g^+_{\o_{k}^{i}}(v_{k})=0 $, we have $z_{k}^{i}=v_k$ for any choice of $d\ne0$.   Note that the feasibility step \eqref{eq-batch}  has the special form of Polyak’s subgradient iteration, see e.g.,  \cite{Pol:67,Pol:01}.   Moreover, when $X_\o$ are projectable, then one chooses $g_\o(x) = g_\o^+(x) =  \dist (x,X_\o)$ for all $x \in \re^n$ and the update  \eqref{eq-batch} becomes a usual projection step:
\[ z_k^i = v_k - \beta(v_k  - \Pi_{X_{\o_{k}^{i}}}[v_k]).   \]  
\noindent The initial point $x_0\in Y$ is selected randomly with an arbitrary distribution. The projection on the set $Y$ in the updates \eqref{eq-grad-update} and \eqref{eq-feas-update}  is used to ensure that each $v_k$ and  $x_k$ remain in the set $Y$,  over which the functions $f$ and $g_\o$ are assumed to have bounded subgradients.   Our next assumption deals with the random variables $\o_k^i$ for $i=1:N$ chosen according to the probability distribution $\textbf{P}$. For this, we  introduce the sigma-field $\F_{k}$ induced by the history of the method, i.e.,  by the realizations of the initial point $x_0$ and the variables  $\o_t^i$ up to main iteration~$k$:
\[ \F_{k} = \{x_0\} \cup  \left\{\o_{t}^{j} \mid \;  1\le t\le k, \; 1\le j\le N \right\}, \]
which contains the same information as the set $\{x_0\}\cup \{\{v_{t}, x_t\}\mid 1\le t\le k\}$. For notational convenience, we will allow $k=0$ by letting $\F_{0}=\{x_0\}$. We impose the following assumption.

\begin{assumption}
\label{asum-regularmod}
There exists a constant $c \in (0, \infty)$ such that
\[\dist^2(y,X)\le c \cdot \EXP{(g^+_{\o_{k}^i}(y))^2\mid \F_{k-1}} \quad \forall  y\in Y, \; k \ge1 \; \hbox{and } i=1,\ldots, N.  \]
\end{assumption}
Assumption~\ref{asum-regularmod} does not require that $J_k=\{\o_k^1,\ldots,\o_k^N\}$ are conditionally independent, given $\F_{k-1}$. For example, when the collection $\A$ is finite, the indices $i\in \A$ can be selected randomly without  replacement, i.e., given the realizations of  $\o_k^1=j_1,\ldots,\o_k^{i-1}=j_{i-1}$, the index $\o_k^i$ can be random with realizations in  $\A\setminus\{j_1,\ldots,j_{i-1}\}$. As another example, the index set $\A$ can be partitioned in $N$ disjoint sets  $\cup_{i=1}^N \A_1=\A$, and each $w_k^i$ can be uniformly distributed over the index set $\A_i$. Such a sampling allows for a parallel computation of all $z_k^i$ in the algorithm~\eqref{eq-method}. One can also combine the preceding two possibilities, by using a smaller partition of the set $\A$, and in each of the partitions choose the corresponding $\o_k^i$ sequentially, without replacement.  Assumption~\ref{asum-regularmod} is crucial in our  convergence analysis of method~\eqref{eq-method}. It summarizes all the information we need regarding the  distributions of the random variables $\o_{k}^i$ and the initial point $x_0$.   A discussion on the equivalence between  the  Assumption~\ref{asum-regularmod} and the linear regularity condition for the sets $(X_\o)_{\o \in \A}$ can be found in~\cite{Ned:10,Ned:11,NecRic:18}.  When each set $X_\o$ is given by  a linear inequality $a_\o^T x + b_\o \leq 0$, one can verify that  the  intersection of these halfspaces  over any arbitrary  index set  $\A$ is linearly regular provided that the sequence $(a_\o)_{\o \in \A}$ is bounded, see  \cite{BurFer:93,FerNec:19}. Hence,  Assumption~\ref{asum-regularmod} is also satisfied in this case. Moreover,  Assumption ~\ref{asum-regularmod}  holds  provided that the interior of the intersection over the arbitrary  index set  $\A$  has an interior point \cite{Pol:01}.  However, Assumption~\ref{asum-regularmod} holds for more general sets, e.g., when a strengthened Slater condition holds for a  collection of convex functional constraints $(X_\o)_{\o \in \A}$, such as the generalized Robinson condition, as detailed in Corollary 2 of~\cite{LewPan:98}.


\subsection{Preliminary results}
\label{sec:preliminary}
In this section, we  derive some  preliminary results for later use in the convergence analysis of method \eqref{eq-method}. We start by recalling  a  basic property of the projection operation  on a closed convex set $Y\subseteq\rn$ \cite{Ned:10}:
\begin{equation}
\label{eqn:projection}
\|\Pi_Y[v] - y \|^2 \le \|v-y\|^2 - \|\Pi_Y[v]-v\|^2 \qquad\hbox{for any $v\in\rn$ and $y\in Y$.}
\end{equation}

\noindent We now show that the parameter $c$ in Assumption~\ref{asum-regularmod}   satisfies the following inequality:
\begin{lemma}
\label{lema:c}
Let Assumption~\ref{asum-base}(c) and Assumption~\ref{asum-regularmod} hold. Then, we have:
\[  c M_g^2 \geq 1. \]
\end{lemma}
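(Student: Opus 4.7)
The plan is to combine two one-sided estimates on $g^+_\omega(y)$: an upper bound coming from convexity plus bounded subgradients (Assumption~\ref{asum-base}(c)), and the lower bound on the conditional second moment coming from Assumption~\ref{asum-regularmod}. Concretely, I would pick any $y \in Y$ and set $\bar y = \Pi_X[y]$. Since $\bar y \in X \subseteq X_\omega$, we have $g_\omega(\bar y) \le 0$ for every $\omega \in \A$. By convexity of $g_\omega$ and the fact that any subgradient $s_\omega(y) \in \partial g_\omega(y)$ satisfies $\|s_\omega(y)\| \le M_g$, I would write
\[
g_\omega(y) \;\le\; g_\omega(\bar y) + \la s_\omega(y),\, y - \bar y\ra \;\le\; M_g\|y - \bar y\| \;=\; M_g\,\dist(y,X).
\]
Taking the positive part preserves the bound (since the right-hand side is nonnegative), so $g^+_\omega(y) \le M_g\,\dist(y,X)$.

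Squaring this pointwise inequality and taking conditional expectation with respect to $\F_{k-1}$ for the random index $\o_k^i$ yields
\[
\EXP{(g^+_{\o_k^i}(y))^2 \mid \F_{k-1}} \;\le\; M_g^2\,\dist^2(y,X),
\]
where the bound holds uniformly in $\o_k^i$ and hence survives the expectation. Chaining this with Assumption~\ref{asum-regularmod} gives
\[
\dist^2(y,X) \;\le\; c\,\EXP{(g^+_{\o_k^i}(y))^2 \mid \F_{k-1}} \;\le\; c M_g^2\,\dist^2(y,X).
\]
To conclude $cM_g^2 \ge 1$, I would invoke the nondegenerate case $Y\not\subseteq X$ and choose $y \in Y$ with $\dist(y,X) > 0$, then divide both sides by $\dist^2(y,X)$. (If instead $Y \subseteq X$, then Assumption~\ref{asum-regularmod} holds vacuously with any $c$, and the claim is either trivial or should be read as being about meaningful problems; this edge case would be noted in a brief remark.)

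There is no real obstacle here beyond being careful about which side of the subgradient inequality to use and handling the degenerate case: the convexity inequality applied at $y$ (with subgradient $s_\omega(y)$) is what allows the bound $\|s_\omega(y)\| \le M_g$ from Assumption~\ref{asum-base}(c) to kick in, while applying it at $\bar y$ would require subgradients of $g_\omega$ at points not necessarily in $Y$. Everything else is a one-line combination of the two assumptions.
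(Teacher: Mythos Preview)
Your proposal is correct and follows essentially the same approach as the paper: bound $g_\omega^+(y)$ above by $M_g\,\dist(y,X)$ via the subgradient inequality at $y$ (so that the bound $\|s_\omega(y)\|\le M_g$ applies), then sandwich with Assumption~\ref{asum-regularmod}. The only cosmetic differences are that the paper works with $g_\omega^+$ and splits into the cases $g_\omega(y)>0$ and $g_\omega(y)\le 0$, whereas you bound $g_\omega(y)$ directly and take the positive part afterward; your version is arguably tidier, and you are also more explicit about the degenerate case $Y\subseteq X$, which the paper tacitly assumes away.
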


\begin{proof}
Let $y \in Y$ be such that $y \not \in X$. Then, there exists $\bar{\o} \in \A$ such that the convex function $g_{\bar{\o}}$ satisfies $g_{\bar{\o}}(y) >0$.  Consequently, for any $s_g(y) \in \partial g_{\bar{\o}}(y)$  we also have $s_g(y) \in \partial g_{\bar{\o}}^+(y)$, and using convexity of $g_{\bar{\o}}^+$, we obtain:
\begin{align*}
0 = g_{\bar{\o}}^+(\Pi_X[y]) \geq g_{\bar{\o}}^+(y) + \la s_g(y), \Pi_X[y] - y \ra \geq g_{\bar{\o}}^+(y) - M_g \|\Pi_X[y] -y\|,
\end{align*}
or equivalently
\[  g_{\bar{\o}}^+(y)  \leq  M_g \|\Pi_X[y] -y\|.  \]
On the other hand for those $\o \in \A$ for which $g_\o(y) =0$ we automatically have
\[  0 = g_\o^+(y)  \leq  M_g \|\Pi_X[y] -y\|.  \]
In conclusion, for any $\o \in \A$ there holds:
\[  g_\o^+(y)  \leq  M_g \|\Pi_X[y] -y\|.  \]
Combining the preceding  inequality and Assumption~\ref{asum-regularmod}, we obtain:
\begin{align*}
\dist^2(y,X) & = \|\Pi_X[y] -y\| ^2 \leq c  \EXP{(g^+_{\o_{k}^i}(y))^2\mid \F_{k-1}} \\
& \leq   c  \EXP{ M_g^2  \|\Pi_X[y] -y\| ^2  \mid \F_{k-1}} = c M_g^2 \ \dist^2(y,X),
\end{align*}
which proves our relation $c M_g^2 \geq 1$.  \hfill$\square$
\end{proof}

\noindent We now derive a relation between the iterates $v_{k+1}$ and $x_{k}$.
\begin{lemma}
\label{lem-viter}
Let Assumptions~\ref{asum-base}(a) and~\ref{asum-base}(b) hold. Let $v_{k+1}$ be obtained via equation~\eqref{eq-grad-update} for a given $x_k\in Y$.  Then,  for the unique  optimal solution $x^*$ of the problem \eqref{eq-problem} and  $\rho \in (0, \; 1)$, we have:
\begin{align*}
& \|v_{k+1} - x^*\|^2  +  2\a_k(1-\rho)\left(f(\Pi_X[x_k])- f^*\right) \\
&\qquad \qquad   \le (1 - \a_k\rho\mu)\|x_k - x^*\|^2 + 2\a_k M_f\|\Pi_X[x_k] - x_k\| +\a_k^2M_f^2.
\end{align*}
\end{lemma}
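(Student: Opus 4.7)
My plan starts from the nonexpansiveness of the projection onto $Y$: since $x^* \in X \subseteq Y$, one has
\[
\|v_{k+1} - x^*\|^2 \le \|x_k - \a_k s_f(x_k) - x^*\|^2 = \|x_k - x^*\|^2 - 2\a_k\langle s_f(x_k), x_k - x^*\rangle + \a_k^2 \|s_f(x_k)\|^2,
\]
and the bound $\|s_f(x_k)\| \le M_f$ from Assumption~\ref{asum-base}(b) absorbs the last term into $\a_k^2 M_f^2$. The whole task then reduces to producing a lower bound on $\langle s_f(x_k), x_k - x^*\rangle$ that simultaneously contains the suboptimality gap evaluated at $\Pi_X[x_k]$, a term of order $\mu\|x_k - x^*\|^2$, and only a linear error in $\|x_k - \Pi_X[x_k]\|$.

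For that bound I would combine two separate consequences of Assumption~\ref{asum-base}(b). Plain convexity of $f$ yields $\langle s_f(x_k), x_k - x^*\rangle \ge f(x_k) - f^*$, while chaining the same subgradient inequality with the quadratic growth in Assumption~\ref{asum-base}(b) gives $\langle s_f(x_k), x_k - x^*\rangle \ge \tfrac{\mu}{2}\|x_k - x^*\|^2$. Taking the $\rho$-$(1-\rho)$ convex combination of these two inequalities produces
\[
\langle s_f(x_k), x_k - x^*\rangle \ge (1-\rho)\bigl(f(x_k) - f^*\bigr) + \frac{\rho\mu}{2}\|x_k - x^*\|^2.
\]

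Finally, I would replace $f(x_k)$ by $f(\Pi_X[x_k])$. Since both $x_k$ and $\Pi_X[x_k]$ belong to $Y$ and $\|s_f\|\le M_f$ on $Y$, the subgradient inequality gives $f(x_k) \ge f(\Pi_X[x_k]) - M_f\|x_k - \Pi_X[x_k]\|$; substituting this into the previous display, multiplying through by $-2\a_k$ (which reverses the inequality), and combining the result with the projection estimate from the first paragraph yields the claim after an elementary regrouping. The calculation is essentially mechanical once the two ingredients are in place. The only genuinely conceptual point, and the main thing to get right, is that the paper's ``restricted strong convexity'' is a quadratic growth condition without a subgradient term, so the strongly convex factor $1-\a_k\rho\mu$ cannot be read off from a single textbook strong-convexity inequality; it has to be assembled indirectly through the convex-combination trick above, with the parameter $\rho \in (0,1)$ then appearing naturally as the weight that trades the function-value descent against the quadratic contraction.
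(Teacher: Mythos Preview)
Your proposal is correct and follows essentially the same approach as the paper: the paper also starts from the standard projected-subgradient inequality $\|v_{k+1}-x^*\|^2\le\|x_k-x^*\|^2-2\a_k(f(x_k)-f^*)+\a_k^2M_f^2$, then takes the $\rho$--$(1-\rho)$ convex combination of the restricted strong convexity bound $f(x_k)-f^*\ge\tfrac{\mu}{2}\|x_k-x^*\|^2$ and the ``shift to $\Pi_X[x_k]$'' bound $f(x_k)-f^*\ge f(\Pi_X[x_k])-f^*-M_f\|\Pi_X[x_k]-x_k\|$. The only cosmetic difference is that you carry the subgradient inner product $\langle s_f(x_k),x_k-x^*\rangle$ one step longer before invoking convexity, whereas the paper applies convexity immediately; the two arguments are otherwise identical.
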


\begin{proof}
Using the standard analysis of the projected subgradient method and the fact that the subgradients    of $f$ are uniformly bounded on $Y$, we have for the  optimal solution $x^*$ of  \eqref{eq-problem}  the following inequality, see e.g.,  \cite{Pol:67,Pol:69}:
\be
\label{eq-vkrel}
\|v_{k+1} -  x^*\|^2 \le \|x_k - x^*\|^2 - 2\a_k \left(f(x_k)-f(x^*)\right) +\a_k^2 M_f^2.
\ee
We provide a lower bound on $f(x_{k}) - f(x^*)$.  We consider two choices, namely, one is based on the  strong convexity of $f$ and the other is based on  considering another intermittent point.  By the strong convexity of $f$, we have
\begin{align}
\label{eq-split1}
& f(x_{k}) - f(x^*)  \ge \la s_f(x^*), x_k - x^* \ra +   \frac{\mu}{2}\|x_k - x^*\|^2 \nonumber \\
&  = \la s_f(x^*), \Pi_X[x_k] - x^* \ra +   \la s_f(x^*), x_k - \Pi_X[x_k] \ra  +  \frac{\mu}{2}\|x_k - x^*\|^2 \nonumber \\
& \geq  \la s_f(x^*), x_k - \Pi_X[x_k] \ra  +  \frac{\mu}{2}\|x_k - x^*\|^2 \nonumber \\
& \geq - M_f \|\Pi_X[x_k] - x_k\|  +  \frac{\mu}{2}\|x_k - x^*\|^2,
\end{align}
where the second inequality follows from the optimality conditions for $x^*$ and the last inequality follows from the Cauchy-Schwartz and  boundedness of the subgradients of $f$ on $Y$. The other choice consists of adding and subtracting $f(\Pi_X[x_k])$, which yields
\begin{align*}
f(x_{k}) - f(x^*) & = f(x_{k}) - f(\Pi_X[x_k]) + f(\Pi_X[x_k]) - f(x^*) \cr
& \ge-\|s_f(\Pi_X[x_k])\| \, \|\Pi_X[x_k]-x_k\| + f(\Pi_X[x_k])- f(x^*),
\end{align*}
where the last inequality follows by the convexity of $f$ and the Cauchy-Schwarz inequality. By Assumption~\ref{asum-base}(b), the subgradients of $f$ are uniformly bounded on $Y$ and hence, also     on $X$, implying that
\be
\label{eq-split2}
f(x_{k}) - f(x^*) \ge f(\Pi_X[x_k]) - f(x^*)  - M_f \|\Pi_X[x_k] - x_k\|.
\ee
We now let $\rho \in (0, 1)$ be arbitrary. By multiplying relation~\eqref{eq-split1} with $\rho$ and relation~\eqref{eq-split2} with $(1-\rho)$, and by adding the resulting relations, we obtain
\begin{eqnarray}
\label{eq-fvfy}
f(x_{k}) - f(x^*) & \ge &  \frac{\rho \mu}{2} \|x_k - x^*\|^2 \\
&& +(1-\rho) \left(f(\Pi_X[x_k]) - f(x^*)\right) -  M_f \|\Pi_X[x_k]-x_k\|. \nonumber
\end{eqnarray}
By using the estimate~\eqref{eq-fvfy} in relation~\eqref{eq-vkrel}, we obtain
\begin{eqnarray}\label{eq-estvkxk0}
\|v_{k+1}-x^*\|^2
     &\le& (1- \a_k\rho\mu)\|x_k - x^*\|^2  -2 \a_k (1-\rho) \left(f(\Pi_X[x_k]) - f(x^*)\right) \cr
     &&+ 2\a_k M_f\|\Pi_X[x_k]-x_k\| +\a^2_k M_f^2,
\end{eqnarray}
and after  re-arranging some of the terms we get the relation of the lemma.  \hfill$\square$
\end{proof}

\begin{remark}
The best choice for the parameter $\rho$ is not apparent at this point. It is important to have it in order to have the function value involved in the expression,  but it can be that $\rho=\frac{1}{2}$  will just do fine.
\end{remark}

%

\noindent We next state a result that will be used to provide a basic relation between the iterates $v_k$ and $x_{k-1}$. The relation is stated in a generic form, and its proof can be found in \cite{Pol:69,Pol:67}.
\begin{lemma} \cite{Pol:69,Pol:67}
\label{lemma:basiter}
Let $g$ be a convex function over a closed convex  set $Z$, and let $y$ be given by
 \[ y = \Pi_{Z} \left[ v - \b  \frac{g^+(v)}{\|d\|^2}\, d\right]  \qquad \hbox{for $v \in Z$, $d \in\partial g^+(v)$ and $\beta>0$},\]
where $d\ne0$. Then, for any $\bar z \in Z$ such that $g^+(\bar z)=0$, we have
\[ \|y -\bar z\|^2 \le  \|v -\bar z\|^2 - \b(2-\beta)\,\frac{(g^+(v))^2}{\|d\|^2}.\]
\end{lemma}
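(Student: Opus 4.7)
The plan is to reduce the statement to a direct calculation by first removing the projection via non-expansiveness, then expanding the square, and finally invoking the subgradient inequality for $g^+$. Concretely, since $\bar z \in Z$, the standard projection inequality yields $\|y - \bar z\|^2 \le \| v - \beta (g^+(v)/\|d\|^2)\, d - \bar z\|^2$.

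Next, I would expand the right-hand side, obtaining $\|v - \bar z\|^2 - 2\beta (g^+(v)/\|d\|^2)\,\langle d,\, v - \bar z\rangle + \beta^2 (g^+(v))^2/\|d\|^2$, after simplifying the quadratic term via $\|d\|^2/\|d\|^4 = 1/\|d\|^2$. Note that $g^+(v) \ge 0$ and $\|d\|^2 > 0$ (since $d \ne 0$), so all the scalar coefficients are well defined.

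The main step is to bound $\langle d,\, v - \bar z\rangle$ from below. Because $d \in \partial g^+(v)$ and $g^+$ is convex as the pointwise maximum of the convex function $g$ and the constant $0$, the subgradient inequality gives $g^+(\bar z) \ge g^+(v) + \langle d,\, \bar z - v\rangle$. Substituting $g^+(\bar z) = 0$ and rearranging produces $\langle d,\, v - \bar z\rangle \ge g^+(v)$. Since the coefficient $2\beta g^+(v)/\|d\|^2$ is nonnegative, the cross term in the expansion is at most $-2\beta (g^+(v))^2/\|d\|^2$. Combining with the quadratic remainder $\beta^2 (g^+(v))^2/\|d\|^2$ yields exactly $\|y - \bar z\|^2 \le \|v - \bar z\|^2 - \beta(2-\beta)(g^+(v))^2/\|d\|^2$.

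I do not expect any serious obstacle: the argument is a textbook Polyak-style projected-subgradient calculation. The only nuance worth double-checking is that the subgradient must be taken for $g^+$ rather than $g$ (so the hypothesis $d \in \partial g^+(v)$ is essential for the subgradient inequality to deliver $\langle d, v - \bar z\rangle \ge g^+(v)$ when $g^+(\bar z) = 0$), and the degenerate case $g^+(v) = 0$ is trivial because the update then reduces to $y = \Pi_Z[v] = v$ and the claim becomes $\|v - \bar z\|^2 \le \|v - \bar z\|^2$.
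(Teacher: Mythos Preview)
Your argument is correct and is exactly the standard Polyak computation: drop the projection via non-expansiveness toward $\bar z\in Z$, expand the square, and use the subgradient inequality for $g^+$ together with $g^+(\bar z)=0$ to bound the cross term. The paper does not actually prove this lemma; it simply cites \cite{Pol:69}, and what you wrote is precisely the argument that reference contains, so there is nothing to compare.
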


\noindent In the analysis, we will also make use of the relation for averages,  stating that for  given vectors $u_1,\ldots, u_N\in\rn$ and their average $\bar u=\frac{1}{N}\sum_{i=1}^N u_i$, the following relation is valid for any vector $w\in\re^n$:
\be\label{eq-averages}
\|\bar u-w\|^2=\frac{1}{N}\sum_{i=1}^N\|u_i-w\|^2 -\frac{1}{N}\sum_{j=1}^N\|u_j - \bar u\|^2.\ee

\noindent Now we provide a basic relation for the iterate $x_k$ upon completion of the $N$ randomly sampled feasibility updates.
\begin{lemma}\label{lem-xiter}
Let Assumption~\ref{asum-base}(a)  hold. Let $x_{k}$ be obtained via updates~\eqref{eq-batch}  and~\eqref{eq-feas-update} for a given $v_k\in Y$ and $\beta >0$.  Then, the following relation holds:
%
\begin{align*}
& \dist^2(x_{k}, X)\le \dist^2(v_{k},X)  - \frac{\b(2-\b)}{N}\sum_{i=1}^N\frac{(g_{\o_{k}^{i}}^+(v_k))^2}{\|d_k^i\|^2}   - \b^2 V_N(v_k),
\end{align*}
where $V_N(v_k)$ is the total variation of the minibatch subgradients, i.e.,
\[V_N(v_k)=\frac{1}{N}\sum_{i=1}^N\left\|\frac{g_{\o_{k}^{i}}^+(v_k)}{\|d_k^i \|^2}\, d_k^i - \frac{1}{N}\sum_{j=1}^N\frac{g_{\o_{k}^{j}}^+(v_k)}{\|d_k^j\|^2}\, d_k^j\right\|^2.\]
\end{lemma}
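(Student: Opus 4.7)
The plan is to reduce $\dist^2(x_k,X)$ to a quantity of the form $\|\bar z_k-\bar z\|^2$ for a single well-chosen reference point $\bar z\in X$, then split this via the averaging identity~\eqref{eq-averages}, and finally bound each piece using Lemma~\ref{lemma:basiter}. The natural reference point is $\bar z=\Pi_X[v_k]$: it lies in $X\subseteq Y$, which is what we need to apply the projection property~\eqref{eqn:projection}; it satisfies $g_\o^+(\Pi_X[v_k])=0$ for every $\o\in\A$, which is exactly the feasibility hypothesis of Lemma~\ref{lemma:basiter}; and $\|v_k-\Pi_X[v_k]\|=\dist(v_k,X)$ is precisely the quantity we want to see on the right-hand side.

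First, I would bound $\dist^2(x_k,X)\le\|x_k-\Pi_X[v_k]\|^2$ and apply~\eqref{eqn:projection} to $x_k=\Pi_Y[\bar z_k]$ with $y=\Pi_X[v_k]\in Y$, obtaining $\dist^2(x_k,X)\le\|\bar z_k-\Pi_X[v_k]\|^2$. Next I would invoke~\eqref{eq-averages} with $u_i=z_k^i$ and $w=\Pi_X[v_k]$ to split the squared norm as
\[\|\bar z_k-\Pi_X[v_k]\|^2=\frac{1}{N}\sum_{i=1}^N\|z_k^i-\Pi_X[v_k]\|^2-\frac{1}{N}\sum_{j=1}^N\|z_k^j-\bar z_k\|^2.\]

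For each term in the first sum I would apply Lemma~\ref{lemma:basiter} with $Z=\R^n$, so that its projection reduces to the identity and matches the projection-free update~\eqref{eq-batch}, with $g=g_{\o_k^i}$ and with $\bar z=\Pi_X[v_k]$. Since $\Pi_X[v_k]\in X\subseteq X_{\o_k^i}$ we have $g^+_{\o_k^i}(\Pi_X[v_k])=0$, so the lemma yields
\[\|z_k^i-\Pi_X[v_k]\|^2\le\dist^2(v_k,X)-\b(2-\b)\,\frac{(g^+_{\o_k^i}(v_k))^2}{\|d_k^i\|^2}.\]
For the second sum, using $z_k^i-v_k=-\b\,g^+_{\o_k^i}(v_k)d_k^i/\|d_k^i\|^2$ (and hence $\bar z_k-v_k$ is the average of these vectors), the difference $z_k^j-\bar z_k$ equals $-\b$ times the deviation of $g^+_{\o_k^j}(v_k)d_k^j/\|d_k^j\|^2$ from its mean, so this sum evaluates exactly to $\b^2 V_N(v_k)$. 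Adding the two contributions produces the stated inequality.

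I do not anticipate a genuine obstacle; the argument is essentially a bookkeeping exercise once the reference point $\Pi_X[v_k]$ is identified and the averaging identity is recognized as the right tool. The only subtle point is the presence of the outer projection $\Pi_Y$ in~\eqref{eq-feas-update}, but this is absorbed cleanly by~\eqref{eqn:projection}; the extra nonnegative term $\|x_k-\bar z_k\|^2$ it generates can simply be dropped, since the desired bound does not require it.
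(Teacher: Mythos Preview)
Your proposal is correct and follows essentially the same route as the paper: the projection property~\eqref{eqn:projection} to pass from $x_k$ to $\bar z_k$, the averaging identity~\eqref{eq-averages} to split $\|\bar z_k-\cdot\|^2$, Lemma~\ref{lemma:basiter} with $Z=\re^n$ for each $z_k^i$, and the explicit evaluation of $\|z_k^i-\bar z_k\|^2$ as $\b^2$ times the variance term. The only cosmetic difference is that the paper first carries a generic $y\in X$ through the computation (obtaining an inequality it later reuses with $y=x^*$) and specializes to $y=\Pi_X[v_k]$ at the end, whereas you fix the reference point $\Pi_X[v_k]$ from the outset.
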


\begin{proof}
By the projection property  \eqref{eqn:projection} and the definition of $x_k$, we have for any $y\in  X$ that:
\be
\label{eq-averxk0}
\|x_k-y\|^2\le  \left\|\bar z_k - y\right\|^2 - \left\|x_k - \bar z_k\right\|^2.
\ee
By the definition we have $\bar z_k=\frac{1}{N}\sum_{i=1}^N z_{k}^{i}$. Thus,  by using relation~\eqref{eq-averages} for the collection $z_k^1,\ldots,z_k^N$, we have for any $w\in \rn$,
\be\label{eq-zk-aver} \left\| \bar z_k - w\right\|^2=\frac{1}{N}\sum_{i=1}^N\|z_k^i-w\|^2
       -\frac{1}{N}\sum_{i=1}^N \left\| z_k^i - \bar z_k \right\|^2.\ee
Letting $w=y$ in the preceding relation and combining the resulting relation with~\eqref{eq-averxk0}, we obtain
\[\|x_k-y\|^2\le \frac{1}{N}\sum_{i=1}^N\|z_k^i - y\|^2
       -\frac{1}{N}\sum_{i=1}^N \left\| z_k^i - \bar z_k\right\|^2  - \|x_k - \bar z_k\|^2.\]
Now, we use the definition of the iterates $z_{k}^i$ in algorithm~\eqref{eq-method}  and Lemma~\ref{lemma:basiter}, with $Z=\re^n$. Thus, we obtain  for any $y\in X$ (for which we would have $g_{\o_{k}^i}^+(y)=0$  for any realization of $\o_{k}^i$) and for any $i=1,\ldots,N$,
 \[\|z_{k}^i - y\|^2 \le  \|v_{k} - y\|^2 -\b(2-\beta)\,\frac{(g_{\o_{k}^{i}}^+(v_k))^2}{\|d_k^i\|^2}.\]
 Hence, it follows that for any $y \in X$,
       \[\|x_{k}-y\|^2\le \|v_{k} - y\|^2  - \frac{\b(2-\b)}{N}\sum_{i=1}^N\frac{(g_{\o_{k}^{i}}^+(v_k))^2}{\|d_k^i\|^2}
       -\frac{1}{N}\sum_{i=1}^N\|z_k^i -\bar z_k\|^2 - \|x_k - \bar z_k\|^2.\]


\noindent From the definition of the iterates $z_{k}^i$ in algorithm~\eqref{eq-method}, we see that
\[\|z_k^i -\bar z_k\|^2 =\b^2\left\|\frac{g_{\o_{k}^{i}}^+(v_k)}{\|d_k^i \|^2}\, d_k^i
- \frac{1}{N}\sum_{j=1}^N\frac{g_{\o_{k}^{j}}^+(v_k)}{\|d_k^j\|^2}\, d_k^j\right\|^2.\]
By defining
\[V_N(v_k)=\frac{1}{N}\sum_{i=1}^N \left\|\frac{g_{\o_{k}^{i}}^+(v_k)}{\|d_k^i \|^2}\, d_k^i
- \frac{1}{N}\sum_{j=1}^N\frac{g_{\o_{k}^{j}}^+(v_k)}{\|d_k^j\|^2}\, d_k^j\right\|^2,\]
we have
\[\frac{1}{N}\sum_{i=1}^N\|z_k^i -\bar z_k\|^2=\b^2 V_N(v_k).\]
Therefore, we obtain for any $y \in X$,
\begin{align}
\label{eq:lema4y}
\|x_{k}-y\|^2\le \|v_{k} - y\|^2  - \frac{\b(2-\b)}{N} \sum_{i=1}^N\frac{(g_{\o_{k}^{i}}^+(v_k))^2}{\|d_k^i\|^2} -  \b^2 V_N(v_k).
\end{align}
The statement of the lemma  follows by letting $y=\Pi_X[v_k]$ in the preceding relation
and using the  fact that $\|x_k - \Pi_X[x_k]\|\le  \|x_{k} - \Pi_X[v_k\|\|$.   \hfill$\square$
\end{proof}

\noindent Let us define the following  parameters:
\begin{align}
\label{eq:Lnk}
L_N^k =   \left\| \frac{1}{N}\sum_{i=1}^N\frac{g_{\o_{k}^{i}}^+(v_k)}{\|d_k^i\|^2}\, d_k^i\right\|^2 \Big{/} \frac{1}{N} \sum_{i=1}^N\frac{(g_{\o_{k}^{i}}^+(v_k))^2}{\|d_k^i\|^2}   \quad \text{and}  \quad L_N = \max_{k \geq 0} L_N^k.
\end{align}
From Jensen's inequality it follows that $L_N^k \leq 1$. However, there are also convex functions $g_\o$ such that $L_N^k < 1$. We postpone the derivation of such examples of functional constraints satisfying condition $L_N^k < 1$ until  Section \ref{sec:Lnk}.  The parameter $L_N \leq 1$ will play a key role in our derivations below. In particular, we obtain the following simplification for   Lemma~\ref{lem-xiter}.

\begin{lemma}
\label{lem-xiter1}
Let Assumptions~\ref{asum-base}(a) and \ref{asum-base}(c)  hold. Let $L_N \leq 1$ as defined in \eqref{eq:Lnk} and $x_{k}$ be obtained via updates~\eqref{eq-batch}  and~\eqref{eq-feas-update} for a given $v_k\in Y$ and extrapolated stepsize $\beta \in (0, \, 2/L_N)$.  Then, the following relation holds:
\begin{align*}
& \dist^2(x_{k}, X) \le \dist^2(v_{k},X)  - \frac{\b(2 - \b L_N)}{N  M_g^2} \sum_{i=1}^N (g_{\o_{k}^{i}}^+(v_k))^2.
\end{align*}
\end{lemma}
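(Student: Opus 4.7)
The plan is to start from the bound in Lemma~\ref{lem-xiter} and reorganize the two non-positive terms into a single one whose coefficient involves $L_N$. Writing $u_i = \frac{g^+_{\omega_k^i}(v_k)}{\|d_k^i\|^2}\, d_k^i$ with average $\bar u = \frac{1}{N}\sum_{i=1}^N u_i$, the key is the classical variance identity
\[
V_N(v_k) \;=\; \frac{1}{N}\sum_{i=1}^N \|u_i - \bar u\|^2 \;=\; \frac{1}{N}\sum_{i=1}^N \|u_i\|^2 - \|\bar u\|^2,
\]
which is just relation \eqref{eq-averages} applied with $w=\bar u$. Since $\|u_i\|^2 = (g^+_{\omega_k^i}(v_k))^2/\|d_k^i\|^2$, the definition \eqref{eq:Lnk} of $L_N^k$ gives $\|\bar u\|^2 = L_N^k\cdot \frac{1}{N}\sum_{i=1}^N (g^+_{\omega_k^i}(v_k))^2/\|d_k^i\|^2$, and hence
\[
V_N(v_k) \;=\; (1 - L_N^k)\cdot \frac{1}{N}\sum_{i=1}^N \frac{(g^+_{\omega_k^i}(v_k))^2}{\|d_k^i\|^2}.
\]

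Next I substitute this into the bound of Lemma~\ref{lem-xiter}. The combined coefficient on $\frac{1}{N}\sum_{i=1}^N (g^+_{\omega_k^i}(v_k))^2/\|d_k^i\|^2$ becomes
\[
\beta(2-\beta) + \beta^2(1-L_N^k) \;=\; 2\beta - \beta^2 L_N^k \;=\; \beta(2 - \beta L_N^k),
\]
so
\[
\dist^2(x_k, X) \;\le\; \dist^2(v_k, X) - \frac{\beta(2 - \beta L_N^k)}{N}\sum_{i=1}^N \frac{(g^+_{\omega_k^i}(v_k))^2}{\|d_k^i\|^2}.
\]
Using $L_N^k \le L_N$ from \eqref{eq:Lnk}, together with the hypothesis $\beta \in (0,\,2/L_N)$, ensures that $\beta(2-\beta L_N^k) \ge \beta(2-\beta L_N) > 0$, so replacing $L_N^k$ by $L_N$ only weakens the (negative) term.

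Finally I invoke Assumption~\ref{asum-base}(c), which gives $\|d_k^i\| \le M_g$ and hence $1/\|d_k^i\|^2 \ge 1/M_g^2$; since the coefficient is now positive I may lower-bound $1/\|d_k^i\|^2$ by $1/M_g^2$ inside the sum without changing the direction of the inequality. This yields exactly the claimed estimate
\[
\dist^2(x_k,X) \;\le\; \dist^2(v_k,X) - \frac{\beta(2 - \beta L_N)}{N M_g^2}\sum_{i=1}^N (g^+_{\omega_k^i}(v_k))^2.
\]
The only potential pitfall is the sign bookkeeping when folding $-\beta^2 V_N(v_k)$ into the other negative term; keeping the step $\beta(2-\beta)+\beta^2(1-L_N^k) = \beta(2-\beta L_N^k)$ explicit avoids that. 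Everything else is substitution and the Assumption~\ref{asum-base}(c) bound.
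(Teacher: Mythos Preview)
Your proof is correct and follows essentially the same approach as the paper: rewrite $V_N(v_k)$ via the variance identity as $(1-L_N^k)\cdot\frac{1}{N}\sum_i (g^+_{\omega_k^i}(v_k))^2/\|d_k^i\|^2$, combine the two negative terms from Lemma~\ref{lem-xiter} into $\beta(2-\beta L_N^k)$, replace $L_N^k$ by $L_N$, and finish with the subgradient bound $\|d_k^i\|\le M_g$. If anything, your version is slightly more explicit about why the replacement of $L_N^k$ by $L_N$ preserves the inequality direction.
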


\begin{proof}
Note that the total variation of the minibatch subgradients $V_N(v_k)$ can be written equivalently as:
\[  V_N(v_k)  = \frac{1}{N} \sum_{i=1}^N\frac{(g_{\o_{k}^{i}}^+(v_k))^2}{\|d_k^i\|^2} - \left\| \frac{1}{N}\sum_{i=1}^N\frac{g_{\o_{k}^{i}}^+(v_k)}{\|d_k^i\|^2}\, d_k^i\right\|^2.  \]
Using the previous expression of $V_N$ and the definitions of $L_N^k$ and $L_N$ from  \eqref{eq:Lnk} in Lemma ~\ref{lem-xiter}, we get:
\begin{align*}
& \dist^2(x_{k}, X) \le \dist^2(v_{k},X)  - \frac{\b(2-\b)}{N}\sum_{i=1}^N\frac{(g_{\o_{k}^{i}}^+(v_k))^2}{\|d_k^i\|^2}   - \b^2 V_N(v_k) \\
& = \dist^2(v_{k},X)  - \frac{\b(2-\b)}{N} \sum_{i=1}^N\frac{(g_{\o_{k}^{i}}^+(v_k))^2}{\|d_k^i\|^2}  - \frac{\b^2 (1 - L_N^k)}{N} \sum_{i=1}^N\frac{(g_{\o_{k}^{i}}^+(v_k))^2}{\|d_k^i\|^2}\\
&  \leq  \dist^2(v_{k},X)  - \frac{\b(2 - \b L_N)}{N} \sum_{i=1}^N\frac{(g_{\o_{k}^{i}}^+(v_k))^2}{\|d_k^i\|^2}.
\end{align*}
By Assumption~\ref{asum-base}(c)   each function $g_i$ has bounded subgradients uniformly on $Y$. Hence,  we have $\|d_k^i\| \le M_g$, which used  in the previous inequality implies the statement of the lemma.  \hfill$\square$
\end{proof}

\noindent  Note that  the previous result shows that we can use \textit{extrapolated} stepsize $\b \in (0, 2/L_N)$ in minibatch settings instead of the typical $\b \in (0, 2)$ used e.g. in \cite{Ned:11}.  Clearly, when $L_N<1$ we have $2/L_N >2$ and consequently, such extrapolation will accelerate convergence of the parallel algorithm. This can be also observed in simulations (see e.g. Fig. 3 below).  Moreover,  the largest decrease in  Lemma~\ref{lem-xiter1} is obtained by maximizing $\beta (2 - \beta L_N)$, that is, the optimal stepsize is $\beta = 1/L_N$. We now combine  Lemma~\ref{lem-viter} and Lemma~\ref{lem-xiter1} to provide a basic relation for the subsequent analysis.

\begin{lemma}
\label{lem-mainiter}
Consider the method in~\eqref{eq-method}, and let Assumption~\ref{asum-base} hold.
Let the stepsize $\a_k$ be such that $1 - \frac{\a_k\mu}{2}>0$ for all $k\ge0$ and stepsize $\beta \in (0, \; 2/L_N)$, with $L_N \leq 1$ defined in \eqref{eq:Lnk}.
Then, the iterates of the method~\eqref{eq-method} satisfy the following recurrence  for the optimal solution $x^*$ and for all $k\ge 0$:
\begin{eqnarray*}
&& \|v_{k+1} - x^*\|^2  +  \a_k\left(f(\Pi_X[x_k])- f^*\right) \le \left(1 - \frac{\a_k\mu}{2}\right)\|v_k - x^*\|^2\cr
&& \qquad   - \left(1 - \frac{\a_k\mu}{2}\right) \frac{\b(2 - \b L_N)}{N \, M_g^2}   \sum_{i=1}^N(g_{\o_{k}^{i}}^+(v_k))^2  +  \eta \|\Pi_X[x_k] - x_k\|^2 \cr
&& \qquad +\a_k^2 \left(1+ \frac{1}{ \eta} \right)M_f^2,
\end{eqnarray*}
where $\eta>0$ is arbitrary.
\end{lemma}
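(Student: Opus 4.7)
The plan is to combine Lemma~\ref{lem-viter} (specialized to $\rho = 1/2$) with the bound implicit in the proof of Lemma~\ref{lem-xiter1} applied at $y=x^*$, and to handle a resulting cross term via Young's inequality. I would proceed in three short steps.

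First, I would invoke Lemma~\ref{lem-viter} with $\rho = 1/2$, which yields
\[
\|v_{k+1}-x^*\|^2 + \a_k\bigl(f(\Pi_X[x_k]) - f^*\bigr) \le \left(1-\tfrac{\a_k\mu}{2}\right)\|x_k-x^*\|^2 + \a_k M_f\|\Pi_X[x_k]-x_k\| + \a_k^2 M_f^2.
\]
This already exhibits the desired function-value term on the left-hand side and isolates two quantities to control: $\|x_k-x^*\|^2$ and the linear term $\a_k M_f\|\Pi_X[x_k]-x_k\|$.

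Second, to bound $\|x_k-x^*\|^2$ in terms of $\|v_k-x^*\|^2$, I would revisit inequality~\eqref{eq:lema4y} from the proof of Lemma~\ref{lem-xiter}, which is valid for any $y\in X$, and instantiate it at $y=x^*$. Then, following exactly the substitution used in Lemma~\ref{lem-xiter1} (rewrite $V_N(v_k)$ as $(1-L_N^k)\,\tfrac{1}{N}\sum_i (g_{\o_k^i}^+(v_k))^2/\|d_k^i\|^2$, use $L_N^k\le L_N$ to preserve the sign, and bound $\|d_k^i\|\le M_g$ via Assumption~\ref{asum-base}(c)), I would obtain
\[
\|x_k-x^*\|^2 \le \|v_k-x^*\|^2 - \frac{\b(2-\b L_N)}{N M_g^2}\sum_{i=1}^N \bigl(g_{\o_k^i}^+(v_k)\bigr)^2.
\]
The hypothesis $\b\in(0,2/L_N)$ ensures the subtracted term is nonnegative, and the hypothesis $1-\a_k\mu/2>0$ lets me multiply through by $(1-\a_k\mu/2)$ without flipping signs.

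Third, I would apply Young's inequality $ab\le \tfrac{\eta}{2}a^2 + \tfrac{1}{2\eta}b^2$ with $a=\|\Pi_X[x_k]-x_k\|$ and $b=\a_k M_f$, giving
\[
\a_k M_f\|\Pi_X[x_k]-x_k\| \le \frac{\eta}{2}\|\Pi_X[x_k]-x_k\|^2 + \frac{\a_k^2 M_f^2}{2\eta},
\]
and absorb the extra $\a_k^2 M_f^2/(2\eta)$ into the existing $\a_k^2 M_f^2$ to form the factor $\a_k^2(1+\tfrac{1}{2\eta})M_f^2$. Plugging the Step~2 bound into the first inequality and then applying this Young split produces the stated recurrence.

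There is no genuine obstacle here, since each step is a mechanical combination of two lemmas already proved. The only thing I would be careful about is sign bookkeeping: the factor $(1-\a_k\mu/2)$ is nonnegative by hypothesis and must multiply the negative sum $-\tfrac{\b(2-\b L_N)}{N M_g^2}\sum_i(g_{\o_k^i}^+(v_k))^2$ inherited from Step~2, which is exactly the form appearing in the lemma's conclusion.
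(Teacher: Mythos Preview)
Your proposal is correct and follows essentially the same approach as the paper's own proof: specialize Lemma~\ref{lem-viter} to $\rho=1/2$, use the argument behind Lemma~\ref{lem-xiter1} at $y=x^*$ to bound $\|x_k-x^*\|^2$ by $\|v_k-x^*\|^2$ minus the sum of squared positive parts, and then split the cross term $\a_k M_f\|\Pi_X[x_k]-x_k\|$ via Young's inequality. The only cosmetic difference is that the paper writes the Young step as $(\a_k\sqrt{\eta^{-1}}M_f)(\sqrt{\eta}\|\Pi_X[x_k]-x_k\|)\le \tfrac{1}{2}(a^2+b^2)$, which is exactly your inequality in different notation.
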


\begin{proof}
Let   $x^*\in X$ be the unique optimal solution of problem \eqref{eq-problem}. Then, we use Lemma~\ref{lem-viter} for  $\rho=\frac{1}{2}$ so that for all $k\ge0$, we have
\begin{eqnarray*}
     \|v_{k+1} - x^*\|^2  &+ & \a_k\left(f(\Pi_X[x_k])- f^*\right)
     \le \left(1 - \frac{\a_k\mu}{2}\right)\|x_k - x^*\|^2\cr
     &&+ 2 \a_k M_f\|\Pi_X[x_k] - x_k\| +\a_k^2M_f^2.
\end{eqnarray*}
Using the same reasoning as in the proof of    Lemma~\ref{lem-xiter1} for the inequality  \eqref{eq:lema4y} with $y=x^*$ gives:
\begin{align*}
\|x_{k}- x^*\|^2
& \le \|v_{k} - x^*\|^2  - \frac{\b(2-\b)}{N }  \sum_{i=1}^N   \frac{(g_{\o_{k}^{i}}^+(v_k))^2}{\|d_k^i\|^2} -  \b^2 V_N(v_k) \\
& \leq \|v_{k} - x^*\|^2  - \frac{\b(2-\b L_N)}{N \, M_g^2}  \sum_{i=1}^N(g_{\o_{k}^{i}}^+(v_k))^2.
\end{align*}
Combining the preceding two relations yields
\begin{align}
\label{eq-estvkxk1}
& \|v_{k+1} - x^*\|^2  +  \a_k\left(f(\Pi_X[x_k])- f^*\right) \le \left(1 - \frac{\a_k\mu}{2}\right)\|v_k - x^*\|^2 \\
&   - \left(1 - \frac{\a_k\mu}{2}\right) \frac{\b(2-\b L_N)}{N \, M_g^2} \sum_{i=1}^N(g_{\o_{k}^{i}}^+(v_k))^2 + 2  \a_k M_f\|\Pi_X[x_k] - x_k\| +\a_k^2M_f^2. \nonumber
\end{align}
We next approximate the  term  that is linear in $\a_k$,  i.e. $2 \a_k M_f \|\Pi_X[x_k]- x_k\|$, with a sum of two quadratic terms, one of which is in the order of $\a_k^2$, as:
\begin{align*}
   2\a_k M_f \|\Pi_X[x_k] - x_k\|
     &=2(\a_k\sqrt{\eta^{-1}}M_f) (\sqrt{\eta}\|\Pi_X[x_k]-x_k\|)\cr
     &\le   \a_k^2\eta^{-1}M_f^2 + \eta \|\Pi_X[x_k]-x_k\|^2 ,
\end{align*}
for any arbitrary $\eta>0$.   Substituting the preceding estimate in~\eqref{eq-estvkxk1},
we obtain the stated relation.  \hfill$\square$
\end{proof}


\subsection{Convergence rates}
\label{sec:convergence_diff}

In this section we derive the convergence rates of Algorithm \eqref{eq-method}. For this,   we first provide a recurrence  relation for the iterates in expectation,  which is the key relation for our convergence rate results. Note that $c M_g^2 \geq 1$ according to Lemma \ref{lema:c} and $L_N \in (0, 1]$.  In the sequel we  provide a detailed convergence analysis for the non-trivial case    $c M_g^2 L_N > 1$. The other case, i.e. $c M_g^2 L_N \leq 1$, implies almost sure feasibility for any $x_t$ generated by the parallel algorithm, with $t \geq 1$, and   it will be discussed in Remark \ref{rem:idealcase}.


\begin{theorem}
\label{thm-exp-basic}
Consider the iterative process~\eqref{eq-method}, and let Assumption~\ref{asum-base} and  Assumption~\ref{asum-regularmod} hold.  Let the stepsizes    $\a_k$ be such that  $1 - \frac{\a_k\mu}{2}>0$ for all $k\ge0$  and $\beta \in (0, \; 2/L_N)$, with $L_N \leq 1$ defined in \eqref{eq:Lnk}, and  assume  $c M_g^2 L_N> 1$. Then, for the algorithm~\eqref{eq-method}, by defining $q_N= \frac{\b(2 - \b L_N)}{c M_g^2} <1$, we have almost surely for all $k\ge 0$,
\begin{eqnarray*}
&&\EXP{ \|v_{k+1} - x^*\|^2 \mid \F_{k-1}}  +  \a_k\EXP{\left(f(\Pi_X[x_k])- f^*\right) \mid \F_{k-1}} \cr
&& \le \left(1 - \frac{\a_k\mu}{2}\right)\|v_k - x^*\|^2 - \frac{1}{2}\left(1 - \frac{\a_k\mu}{2}\right) \frac{q_N}{1-q_N} \EXP{\dist^2(x_{k}, X) \mid \F_{k-1}} \cr
 && \quad + \a_k^2 \left(1 + \frac{4(1-q_N)}{q_N (2-\a_k \mu)} \right) M_f^2.
\end{eqnarray*}
\end{theorem}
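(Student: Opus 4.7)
The plan is to take the conditional expectation of the deterministic inequality in Lemma~\ref{lem-mainiter} with respect to $\F_{k-1}$ and then to convert the residual sum of squared constraint violations into a useful contraction on the distance to $X$ by invoking Assumption~\ref{asum-regularmod}. Note that both $x_{k-1}$ and $v_k$ are $\F_{k-1}$-measurable, so $\|v_k - x^*\|^2$, $\dist^2(v_k,X)$ and the coefficient $1-\a_k\mu/2$ pass through the conditional expectation as constants, while the only randomness on the right-hand side of Lemma~\ref{lem-mainiter} lives in the $\o_k^i$'s and in $x_k$.

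Applying Assumption~\ref{asum-regularmod} term by term to the $N$ sampled constraints gives
\[
\sum_{i=1}^N \EXP{(g_{\o_k^i}^+(v_k))^2 \mid \F_{k-1}} \;\ge\; \frac{N}{c}\,\dist^2(v_k, X),
\]
so the ``$-(1-\a_k\mu/2)\,\b(2-\b L_N)/(N M_g^2)\sum_i(g_{\o_k^i}^+(v_k))^2$'' term in Lemma~\ref{lem-mainiter} becomes $-(1-\a_k\mu/2)\,q_N\,\dist^2(v_k,X)$. Separately, conditioning Lemma~\ref{lem-xiter1} on $\F_{k-1}$ and using the same regularity bound yields the one-step contraction $\EXP{\dist^2(x_k,X)\mid\F_{k-1}} \le (1-q_N)\,\dist^2(v_k,X)$. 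The standing hypothesis $cM_g^2 L_N > 1$ together with $\b\in(0, 2/L_N)$ ensures $q_N\in(0,1)$, so this inequality can be rearranged to $\dist^2(v_k,X) \ge \EXP{\dist^2(x_k,X)\mid\F_{k-1}}/(1-q_N)$, producing an upper bound on the negative term that is expressed in units of $\EXP{\dist^2(x_k,X)\mid\F_{k-1}}$.

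The main obstacle, and the reason Lemma~\ref{lem-mainiter} was proved with a free parameter $\eta$, is that the positive term $\tfrac{\eta}{2}\,\|\Pi_X[x_k]-x_k\|^2 = \tfrac{\eta}{2}\,\dist^2(x_k,X)$ must be absorbed into the new negative one. After taking conditional expectations, this amounts to spending exactly \emph{half} of the available decrease $-(1-\a_k\mu/2)\,q_N/(1-q_N)\,\EXP{\dist^2(x_k,X)\mid\F_{k-1}}$ to cancel $\tfrac{\eta}{2}\,\EXP{\dist^2(x_k,X)\mid\F_{k-1}}$, while keeping the other half in the final recurrence. Solving the cancellation equation forces the choice
\[
\eta \;=\; \frac{(1-\a_k\mu/2)\,q_N}{1-q_N},
\]
and with this value
\[
\a_k^2\Bigl(1 + \frac{1}{2\eta}\Bigr)M_f^2 \;=\; \a_k^2\Bigl(1 + \frac{1-q_N}{(2-\a_k\mu)\,q_N}\Bigr)M_f^2,
\]
which matches exactly the noise coefficient appearing in the theorem. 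Substituting the rearranged distance inequality and this choice of $\eta$ into the conditional expectation of Lemma~\ref{lem-mainiter} yields the stated recurrence. The only delicate verification is that $1-q_N>0$ throughout (so that the division is legitimate), which is precisely what the assumption $cM_g^2L_N>1$ combined with $\b\in(0,2/L_N)$ guarantees, since then $q_N \le 1/(cM_g^2 L_N) < 1$.
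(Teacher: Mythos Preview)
Your proposal is correct and follows essentially the same route as the paper: condition Lemma~\ref{lem-mainiter} on $\F_{k-1}$, use Assumption~\ref{asum-regularmod} to lower-bound the sum of squared constraint violations by $\frac{N}{c}\dist^2(v_k,X)$, combine this with the conditioned version of Lemma~\ref{lem-xiter1} to obtain $\dist^2(v_k,X)\ge (1-q_N)^{-1}\EXP{\dist^2(x_k,X)\mid\F_{k-1}}$, and then choose $\eta=(1-\a_k\mu/2)\,q_N/(1-q_N)$ so that exactly half of the resulting negative distance term survives. Your verification that $q_N<1$ (via $\max_\b \b(2-\b L_N)=1/L_N$ and the standing assumption $cM_g^2L_N>1$) is also the one the paper uses.
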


\begin{proof}
From Lemma~\ref{lem-mainiter}, by taking the conditional expectation on the past $\F_{k-1}$, we have almost surely for all $k\ge0$,
\begin{eqnarray}
\label{eq-exp-nearlydone}
&&\EXP{ \|v_{k+1} - x^*\|^2\F_{k-1}}  +  \a_k\EXP{\left(f(\Pi_X[x_k])- f^*\right) \mid \F_{k-1}} \cr
&& \le \left(1 - \frac{\a_k\mu}{2}\right)\|v_k - x^*\|^2 - \left(1 - \frac{\a_k\mu}{2}\right) \frac{\b(2-\b L_N)}{N M_g^2} \sum_{i=1}^N\EXP{(g_{\o_{k}^{i}}^+(v_k))^2\mid \F_{k-1}} \cr
&& \qquad +  \eta \EXP{\|\Pi_X[x_k] - x_k\|^2 \mid \F_{k-1}}  + \a_k^2 \left( 1+ \frac{1}{\eta} \right)M_f^2,
\end{eqnarray}
where $\eta>0$ is arbitrary. By Assumption~\ref{asum-regularmod}, it follows that
       \[\EXP{(g^+_{\o_k^i}(v_k))^2\mid \F_{k-1}}\ge \frac{1}{c}\dist^2(v_k,X)\qquad
       \hbox{for all }i=1,\ldots, N.\]
Hence
\be\label{eq-expest1}
 \frac{1}{N}
 \sum_{i=1}^N\EXP{(g_{\o_{k}^{i}}^+(v_k))^2\mid \F_{k-1}}\ge \frac{1}{c}\dist^2(v_k,X).\ee
Taking the conditional expectation on the past $\F_{k-1}$ in the relation  of Lemma~\ref{lem-xiter1}, and using relation~\eqref{eq-expest1}, we obtain almost surely
\begin{align}
\label{ineq:crucial}
\EXP{\dist^2(x_{k}, X) \mid \F_{k-1}} \le \left(1-q_N\right) \dist^2(v_{k},X), 
\end{align}
where we denote
\be\label{eq-q}
 q_N= \frac{\b(2-\b L_N)}{c M_g^2}.\ee
Recall that we assume $c M_g^2 L_N > 1$, then   $q_N  < 1$ (since  $\max_{\b} \b(2-\b L_N)=1/L_N$). Hence, $1-q_N>0$.  By dividing with $1-q_N$, we further obtain
 \[\dist^2(v_{k},X)   \ge  \frac{1}{1-q_N}\EXP{\dist^2(x_{k}, X) \mid  \F_{k-1}}.\]
Substituting the preceding estimate in relation~\eqref{eq-expest1}, yields
\begin{eqnarray}
\label{eq-expest2}
\frac{1}{N}  \sum_{i=1}^N\EXP{(g_{\o_{k}^{i}}^+(v_k))^2\mid \F_{k-1}}
 & \ge & \frac{1}{c(1-q_N)}\EXP{\dist^2(x_{k}, X) \mid \F_{k-1}}.
\end{eqnarray}
We now use estimate~\eqref{eq-expest2} in relation~\eqref{eq-exp-nearlydone}, and thus obtain
\begin{eqnarray*}
&&\EXP{ \|v_{k+1} - x^*\|^2\F_{k-1}}  +  \a_k\EXP{\left(f(\Pi_X[x_k])- f^*\right) \mid \F_{k-1}} \cr
&& \le \left(1 - \frac{\a_k\mu}{2}\right)\|v_k - x^*\|^2  - \left(1 - \frac{\a_k\mu}{2}\right) \frac{\b(2-\b L_N)}{(1-q_N) c M_g^2 }
       \EXP{\dist^2(x_{k}, X) \mid \F_{k-1}} \cr
     &&\qquad + \eta \EXP{\|\Pi_X[x_k] - x_k\|^2 \mid \F_{k-1}}  + \a_k^2 \left( 1+ \frac{1}{ \eta} \right)M_f^2.
     \end{eqnarray*}
     By the definition of $q$ (see~\eqref{eq-q}), we have
     \[\frac{\b(2-\b L_N)}{(1-q_N) c M_g^2 }=\frac{q_N}{1-q_N}.\]
     Hence,
\begin{eqnarray*}
       &&\EXP{ \|v_{k+1} - x^*\|^2\F_{k-1}}  +  \a_k\EXP{\left(f(\Pi_X[x_k])- f^*\right) \mid \F_{k-1}} \cr
   &&  \le \left(1 - \frac{\a_k\mu}{2}\right)\|v_k - x^*\|^2  - \left( \left(1 - \frac{\a_k\mu}{2}\right) \frac{q_N}{1-q_N} - \eta  \right)
       \EXP{\dist^2(x_{k}, X) \mid \F_{k-1}} \cr
      &&\quad      + \a_k^2 \left( 1+ \frac{1}{ \eta} \right)M_f^2,
\end{eqnarray*}
and by letting $\eta= \frac{1}{2} \left(1 - \frac{\a_k\mu}{2}\right) \frac{q_N}{1-q_N} > 0$, the desired relation follows.  \hfill$\square$
\end{proof}

\noindent We now turn our attention to the stepsize $\a_k$. We consider $\a_k$ of the form:
\[ \a_k =  \frac{2}{\mu} \g_k \qquad\hbox{for all }k\ge0, \]
for some diminishing sequence $\g_k$ as detailed below. Indeed, for this choice, the recurrence from Theorem  \ref{thm-exp-basic}  becomes:
\begin{eqnarray}\label{eq-relgkexp0}
&&\EXP{ \|v_{k+1} - x^*\|^2 \mid \F_{k-1}}  +  \frac{2}{\mu}\g_k\EXP{\left(f(\Pi_X[x_k])- f^*\right) \mid \F_{k-1}} \cr
&&       \le \left(1 - \g_k\right)\|v_k - x^*\|^2  - \frac{1}{2}\left(1 - \g_k\right) \frac{q_N}{1-q_N}       \EXP{\dist^2(x_{k}, X) \mid \F_{k-1}} \\
        && \quad       +\frac{4}{\mu^2}\g_k^2 \left(1+ \frac{4(1-q_N)}{q_N (2-2\g_k)}\right)M_f^2, \nonumber
\end{eqnarray}
where recall that $q_N= \frac{\b(2-\b L_N)}{c M_g^2}$. Let  $\g_k$ be given by
\[ \g_k = \frac{2}{k+1}, \quad \hbox{hence the stepsize} \quad \a_k = \frac{4}{\mu(k+1)},  \;\;\; \forall  k \ge 0. \]
Since the sequence $\g_k$ is decreasing, we have
       \[\g_k\le \frac{2}{3}\qquad \hbox{for all }k\ge 1,\]
       implying that
       \[1-\g_k\ge \frac{1}{3}\qquad \hbox{for all }k\ge 1.\]
Using this estimate in~\eqref{eq-relgkexp0}, we obtain
\begin{eqnarray}\label{eq-relgkexp1}
       &&\EXP{ \|v_{k+1} - x^*\|^2\F_{k-1}}  +  \frac{2}{\mu}\g_k\EXP{\left(f(\Pi_X[x_k])- f^*\right) \mid \F_{k-1}} \cr
      &&  \le \left(1 - \g_k\right)\|v_k - x^*\|^2  - \frac{1}{6}\frac{q_N}{1-q_N}
       \EXP{\dist^2(x_{k}, X) \mid \F_{k-1}} \\
        && \quad       +\frac{4}{\mu^2}\g_k^2 \left(1+ \frac{6(1-q_N)}{q_N}\right)M_f^2. \nonumber
       \end{eqnarray}

\noindent  Next, we note that
 \[\frac{1- \g_k}{\g_k^2}\le \frac{1} {\g_{k-1}^2}\qquad\qquad\hbox{for all }k\ge1.\]
Dividing~\eqref{eq-relgkexp1} by $\g_k^2$ and using the preceding inequality we have for all $k\ge1$, after taking total expectations and rearranging terms:
\begin{eqnarray*}
&&\g_k^{-2} \EXP{ \|v_{k+1} - x^*\|^2}  +  \frac{2}{\mu}\g_k^{-1}\EXP{\left(f(\Pi_X[x_k])- f^*\right)} + \frac{\g_k^{-2}}{6}\frac{q_N}{1-q_N}\EXP{\dist^2(x_{k}, X)} \cr
&&   \le \g_{k-1}^{-2}\EXP{\|v_k - x^*\|^2} +\frac{4}{\mu^2}\left(1+ \frac{6(1-q_N)}{q_N}\right)M_f^2.
\end{eqnarray*}
Summing these over $k=1,\ldots, t$, for some $t>0$, we obtain
\begin{eqnarray}
\label{eq-relgkexp3}
 &&\g_t^{-2}
       \EXP{ \|v_{t+1} - x^*\|^2}
       +  \frac{2}{\mu}\sum_{k=1}^t\g_k^{-1}\EXP{\left(f(\Pi_X[x_k])- f^*\right)} \\
&&        + \frac{1}{6}\frac{q_N}{1-q_N}\sum_{k=1}^t \g_k^{-2}\EXP{\dist^2(x_{k}, X)} \le \g_{0}^{-2}\EXP{\|v_1 - x^*\|^2} +t \frac{4}{\mu^2}\left(1+ \frac{6(1-q_N)}{q_N}\right)M_f^2. \nonumber
       \end{eqnarray}
Using the definition of $\g_k$, \eqref{eq-relgkexp3} implies
\begin{eqnarray}\label{eq-relgkexp4}
       &&\frac{(t+1)^2}{4}
       \EXP{ \|v_{t+1} - x^*\|^2}
       +  \frac{1}{\mu}\sum_{k=1}^t (k+1)\EXP{\left(f(\Pi_X[x_k])- f^*\right)} \cr
       &&+ \frac{q_N}{24(1 \!-\! q_N)} \! \sum_{k=1}^t (k+1)^2\EXP{\dist^2(x_{k}, X)} \!\le\! \frac{1}{4}\EXP{\|v_1 - x^*\|^2}
       +\! \frac{4 t}{\mu^2} \! \left(1+ \frac{6(1-q_N)}{q_N}\right) \! M_f^2. \nonumber
       \end{eqnarray}
We finally obtain by the linearity of the expectation operation:
        \begin{eqnarray}\label{eq-relgkexp5}
       &&\frac{(t+1)^2}{4}
       \EXP{ \|v_{t+1} - x^*\|^2}
       +  \frac{1}{(t+1)\mu}\EXP{\sum_{k=1}^t (k+1)^2\left(f(\Pi_X[x_k])- f^*\right)} \cr
        && \qquad + \frac{q_N}{24(1-q_N)}\EXP{\sum_{k=1}^t (k+1)^2 \|x_{k} - \Pi_X[x_k]\|^2} \\
        && \le \frac{1}{4}\EXP{\|v_1 - x^*\|^2} +\frac{4 t}{\mu^2}\left(1+ \frac{6(1-q_N)}{q_N}\right)M_f^2. \nonumber
       \end{eqnarray}
Define  for $t\ge 1$ the sum
       \[S_t=\sum_{k=1}^t (k+1)^2 \sim {\cal O}(t^3).\]
Define also the following weighted averages (convex combinations)
 \begin{align}
\label{eq:paravseq}
 \hat x_t=\sum_{k=1}^t a_k\,x_k, \qquad  \hat w_t= \sum_{k=1}^t a_k\,\Pi_X[x_k],
 \end{align}
with $a_k=\frac{(k+1)^2 }{S_t}$, hence satisfying $\sum_{k=1}^t a_k=1$.
Using convexity of the  function $f$ and of the norm-squared, we have
       \begin{eqnarray}\label{eq-relgkexp60}
       &&\frac{(t+1)^2}{4}
       \EXP{ \|v_{t+1} - x^*\|^2}
       +  \frac{S_t}{(t+1)\mu}\EXP{\left(f(\hat w_t)- f^*\right)} + \frac{q_N S_t}{24(1-q_N)}\EXP{\|\hat w_t - \hat x_t\|^2} \cr
       &&\le \frac{1}{4}\EXP{\|v_1 - x^*\|^2} +\frac{4 t}{\mu^2}\left(1+ \frac{6(1-q_N)}{q_N}\right)M_f^2.
       \end{eqnarray}
If we define $b_N^p = q_N(1-q_N)^{-1} = (1-q_N)^{-1} -1$, then \eqref{eq-relgkexp60}  becomes:
\begin{eqnarray}\label{eq-relgkexp6}
       &&\frac{(t+1)^2}{4}
       \EXP{ \|v_{t+1} - x^*\|^2}
       +  \frac{S_t}{(t+1)\mu}\EXP{\left(f(\hat w_t)- f^*\right)} + \frac{b_N^p S_t}{24} \EXP{\|\hat w_t - \hat x_t\|^2} \cr
       &&\le \frac{1}{4}\EXP{\|v_1 - x^*\|^2} +\frac{4 t}{\mu^2}\left(1+ \frac{6}{b_N^p}\right)M_f^2.
       \end{eqnarray}

\noindent Next theorem summarizes the convergence rates followed from the previous discussion. For simplicity of the exposition, we omit the constants and express the rates only in terms of the dominant powers of $t$:
\begin{theorem}
\label{theorem:mainparalg}
Let Assumption~\ref{asum-base} and  Assumption~\ref{asum-regularmod} hold and the stepsizes    $\a_k = \frac{4}{\mu(k+1)}$  and $\beta \in (0, \; 2/L_N)$, with $L_N \leq 1$ defined in \eqref{eq:Lnk}. Let also  assume  $c M_g^2 L_N> 1$. Then,  $q_N= \frac{\b(2-\b L_N)}{c M_g^2} <1$ and $b_N^p = (1-q_N)^{-1} -1$. Moreover, the following sublinear  rates for suboptimality and feasibility violation hold for the average sequence $ \hat x_t$ generated by  the parallel  algorithm~\eqref{eq-method}:
\[    \EXP{|f(\hat x_t) - f^*| } \leq {\cal O} \left( \frac{1}{t}  + \frac{1}{\sqrt{b_N^p} t} \right), \quad  \EXP{\dist_X(\hat x_t)}  \leq  {\cal O} \left( \frac{1}{\sqrt{b_N^p}t} \right).   \]
\end{theorem}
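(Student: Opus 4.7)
The plan is to extract the two claimed rates directly from the master recurrence \eqref{eq-relgkexp6}. That inequality bundles three nonnegative quantities on the left, namely $\frac{(t+1)^2}{4}\EXP{\|v_{t+1}-x^*\|^2}$, $\frac{S_t}{(t+1)\mu}\EXP{f(\hat w_t)-f^*}$, and $\frac{b_N^p S_t}{24}\EXP{\|\hat w_t-\hat x_t\|^2}$, against a right-hand side of the form $\mathcal{O}(1) + \mathcal{O}(t)(1+1/b_N^p)M_f^2/\mu^2$. The crucial counting identity is $S_t = \sum_{k=1}^t (k+1)^2 \sim t^3/3$, so dividing each isolated left-hand term by its coefficient yields decay rates one order better than the linear-in-$t$ growth on the right.

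First I would handle the feasibility violation. Because $\hat w_t$ is a convex combination of projections $\Pi_X[x_k]\in X$ and $X$ is convex, we have $\hat w_t\in X$, so $\dist(\hat x_t,X)\leq \|\hat x_t-\hat w_t\|$. Dropping the other two nonnegative terms in \eqref{eq-relgkexp6} and dividing by $b_N^p S_t/24$ gives $\EXP{\|\hat w_t-\hat x_t\|^2}\leq \mathcal{O}(1/(b_N^p t^2))$ for the dominant term. A Jensen-type application, $\EXP{\|\hat x_t-\hat w_t\|}\leq \sqrt{\EXP{\|\hat x_t-\hat w_t\|^2}}$, then delivers $\EXP{\dist(\hat x_t,X)}\leq \mathcal{O}(1/(\sqrt{b_N^p}\,t))$.

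Next I would address the suboptimality. Dropping the first and third left-hand terms of \eqref{eq-relgkexp6} and dividing by $S_t/((t+1)\mu)$ gives $\EXP{f(\hat w_t)-f^*}\leq \mathcal{O}(1/t + 1/(b_N^p t))$. Since both $\hat x_t$ and $\hat w_t$ lie in the convex set $Y$ on which subgradients of $f$ are bounded by $M_f$, we have $|f(\hat x_t)-f(\hat w_t)|\leq M_f\|\hat x_t-\hat w_t\|$. Using $f(\hat w_t)\geq f^*$ and the triangle inequality yields $|f(\hat x_t)-f^*|\leq (f(\hat w_t)-f^*) + M_f\|\hat x_t-\hat w_t\|$; taking expectations and combining with the feasibility bound above gives the asserted rate $\EXP{|f(\hat x_t)-f^*|}\leq \mathcal{O}(1/t + 1/(\sqrt{b_N^p}\,t))$.

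The chief obstacle is the last transfer step: since $\hat x_t$ may fail to be feasible, $f(\hat x_t)$ can lie below $f^*$, so the absolute value in the theorem statement is essential and cannot be replaced with a one-sided bound. To deflate this into an upper bound, I am forced to introduce the feasible proxy $\hat w_t$, pay the price $M_f\|\hat x_t-\hat w_t\|$, and then route this through Jensen's inequality to avoid losing a factor of $1/b_N^p$ (which would arise from using $\EXP{\|\cdot\|^2}$ directly) and recover the intended $1/\sqrt{b_N^p}$ dependence in the feasibility correction term.
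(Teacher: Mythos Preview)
Your proposal is correct and follows essentially the same approach as the paper's proof: extract the two bounds $\EXP{f(\hat w_t)-f^*}\le \mathcal{O}(1/t+1/(b_N^p t))$ and $\EXP{\|\hat w_t-\hat x_t\|^2}\le \mathcal{O}(1/(b_N^p t^2))$ from the master inequality~\eqref{eq-relgkexp6}, pass to $\EXP{\|\hat w_t-\hat x_t\|}$ via Jensen, and then transfer from $\hat w_t$ to $\hat x_t$ using the subgradient bound $|f(\hat x_t)-f(\hat w_t)|\le M_f\|\hat x_t-\hat w_t\|$. Your treatment of the absolute value via the triangle inequality and the observation $f(\hat w_t)\ge f^*$ is in fact slightly more explicit than the paper's own argument.
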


\begin{proof}
From the recurrence  \eqref{eq-relgkexp6}, omitting the constants but keeping the terms depending on $b_N^p =  (1-q_N)^{-1} -1$, we get the following convergence rates in terms of these weighted averages $\hat w_t$ and $\hat x_t$:
\[ \EXP{f(\hat w_t) - f^*)} \leq {\cal O} \left( \frac{1}{t} + \frac{1}{b_N^p t} \right)  \quad \text{and} \quad  \EXP{\|\hat w_t - \hat x_t\|^2}   \leq {\cal O} \left( \frac{1}{b_N^p t^2} \right). \]
Since   $\hat w_t\in X$ and using  the  Jensen's inequality we get the following convergence rate for the feasibility violation of the constraints:
\[   \EXP{\dist_X(\hat x_t)}  \leq   \EXP{\|\hat w_t - \hat x_t\|} \le \sqrt{\EXP{\|\hat w_t - \hat x_t\|^2} } \leq {\cal O} \left( \frac{1}{\sqrt{b_N^p} t} \right).  \]
Since $\hat x_t\in Y$ and $\hat w_t\in X\subset Y$, by the subgradient boundedness of $f$ on $Y$,   it follows that
\[ \EXP{ |f(\hat x_t)- f(\hat w_t)|} \le M_f  \EXP{ \|\hat x_t -\hat w_t\| } \leq {\cal O} \left( \frac{1}{\sqrt{b_N^p} t} \right), \]
which combined with $\EXP{f(\hat w_t) - f^*)} \leq {\cal O} \left( \frac{1}{t} + \frac{1}{b_N^p t} \right)$, yields also the following convergence rate for suboptimality
\[   \EXP{|f(\hat x_t) - f^*| } \leq {\cal O} \left(  \frac{1}{t}  + \frac{1}{\sqrt{b_N^p} t}  \right),  \]
which proves our theorem. \hfill$\square$
\end{proof}

\noindent We observe that the convergence estimate for the feasibility violation depends explicitly on the minibatch size $N$ via the key parameter $L_N$.  For the optimal stepsize $\beta = 1/L_N$ we get $q_N = 1/c M_g^2 L_N$ and $b_N^p = 1/ (c M_g^2 L_N -1)$. Hence, $b_N^p$ is large provided that $L_N \ll 1$ (small). Note that if $L_N=1$, then $b_N^p$ does not depend on $N$ and hence complexity does not improve with minibatch size $N$. However, as long as $L_N <1$ (and it can be also the case that $L_N \sim 0$), then $b_N^p$ becomes  large, which shows that minibatching  improves complexity.  \textit{To the best of our knowledge, this is the first time that a subgradient method with random minibatch  feasibility updates  is shown to be better than its non-minibatch  variant. We have identified $L_N$ as the key quantity determining whether minibatching helps ($L_N < 1$) or not ($L_N = 1$), and how much (the smaller $L_N$, the more it helps)}. Note also  that the suboptimality estimate contains a term which does not depend on the minibatch size $N$ as it happens for feasibility violation  estimate. This is natural, since the minibatch feasibility steps have no effect on the minimization step of the objective function.

\begin{remark}
Note that the convergence rates  ${\cal O} \left( \frac{1}{t} \right)$ for feasibility and suboptimality  are known to be optimal for the stochastic subgradient method for solving the optimization problem \eqref{eq-problem} under Assumption \ref{asum-base}, see \cite{NemYud:83,Nes:04}. Moreover, the iterative process \eqref{eq-method} does not require knowledge of   the subgradient norm bounds $M_f$ and $M_g$ from Assumption \ref{asum-base}, nor the constant $c$ from Assumption~\ref{asum-regularmod}.  These values are only affecting the constants in the convergence rates, they are not needed for the stepsize selection. The stepsize $\a_k$ requires only  knowledge of some  estimate of the strong convexity constant~$\mu$. Moreover, since $L_N \leq 1$, we can use e.g.,  stepsize $\b \in (0, \; 2) \subseteq  (0, \;  2/L_N)$. Of course, a larger stepsize $\beta$ leads to a faster convergence. Hence, if $L_N < 1$ and it  can be computed, then we should choose an extrapolated steplength $\beta = (2 -\delta)/L_N$ for some $\delta \in (0, 2)$ small. When $L_N$ cannot be computed explicitly, we propose to approximate it online with $L_N^k$, and use  at each iteration an adaptive extrapolated  stepsize  $\beta_k$ of the form $\beta_k =  (2-\delta)/L_N^k$ for some $\delta \in (0, \;  2)$  (see also the discussion from Section \ref{sec:extensions}, equation \eqref{eq:betak}). 
\end{remark}

\begin{remark}
\label{rem:idealcase}
The convergence rates from Theorem \ref{theorem:mainparalg} hold for the non-trivial case $q_N = \frac{\b(2-\b L_N)}{c M_g^2} <1$. Note that the inequality  $q_N<1$ is always satisfied,  provided that  $c M_g^2 L_N> 1$. On the other hand, the  case $q_N \geq 1$ (e.g.,  $c M_g^2 L_N \leq  1$ and $\b = 1/L_N$)  turns out to be  the ideal case, since then we have from  \eqref{ineq:crucial} that  $ \EXP{\dist^2(x_{k}, X) \mid \F_{k-1}} \le  0  \quad \forall k \geq 1$.   Therefore, in this ideal case we achieve almost sure feasibility for the sequence $x_t$ generated by the parallel  algorithm (see \eqref{eq-method}) after one step:
 \[  \EXP{\dist^2(x_{t}, X)} =  0  \quad \forall t \geq 1.   \] 
Using this feasibility relation in the same derivations from Section  
\ref{sec:convergence_diff} we also get  a suboptimality estimate for the average 
sequence  $ \hat x_t$ as in Theorem \ref{theorem:mainparalg}:
\[    \EXP{|f(\hat x_t) - f^*| } \leq {\cal O} \left( \frac{1}{t}  \right)  \quad \forall t \geq 1.   \]
Clearly, from Jensen's inequality we also have almost sure feasibility for the average sequence  $ \hat x_t$:
 \[  \EXP{\dist^2(\hat x_{t}, X)} =  0  \quad \forall t \geq 1.   \] 
 We skip these details since the proof is the same as for the non-trivial case. 
\end{remark}


\subsection{Example of functional constraints having $L_N<1$}
\label{sec:Lnk}
\noindent Let us recall the definition of  the parameters $L_N^k$ and $L_N$ from \eqref{eq:Lnk}:
\begin{align*}
L_N^k =   \left\| \frac{1}{N}\sum_{i=1}^N\frac{g_{\o_{k}^{i}}^+(v_k)}{\|d_k^i\|^2}\, d_k^i\right\|^2 \Big{/} \frac{1}{N} \sum_{i=1}^N\frac{(g_{\o_{k}^{i}}^+(v_k))^2}{\|d_k^i\|^2}   \quad \text{and}  \quad L_N = \max_{k \geq 0} L_N^k.
\end{align*}
From Jensen's inequality we have $L_N^k \leq 1$ and consequently $L_N \leq 1$. On the other hand,   Theorem \ref{theorem:mainparalg} shows that $L_N \ll 1$ is beneficial for a subgradient scheme with minibatch feasibility updates.  In this section we provide  an example of   functional constraints $g_\o$ for which  $L_N < 1$. Let us consider $m$ linear inequality constraints for the convex problem \eqref{eq-problem}:
\[  g_\o(x) = a_\o^T x + b_\o \leq 0  \quad  \forall  \o \in \A=\{1, 2, \cdots, m\}.   \]
Without loss of generality we assume $\|a_\o\|  =1$ for all $\o$. Let us define the matrix $A= [a_1 \cdots a_m]^T$ and the subset of indexes selected at the current  iteration $J_k =\{ \o^1_k \cdots \o^N_k \} \subset \A$. We also denote $J_k^+ = \{ \o \in J_k:  a_\o^T v_k + b_\o > 0  \}$ and denote $A_{J_k^+}$ the submatrix of $A$ having the rows indexed in the set $J_k^+$. With these notations and using that  $\|a_\o\|  =1$ for all $\o$, then  $L_N^k$ can be written explicitly as (assuming that $|J_k^+| \geq 1$):
\begin{align*}
L_N^k & =   \left\| \frac{1}{N}\sum_{\o \in J_k^+} \frac{g_{\o}^+(v_k)}{\|d_k^\o\|^2}\, d_k^\o\right\|^2 \Big{/} \frac{1}{N} \sum_{\o \in J_k^+} \frac{(g_{\o}^+(v_k))^2}{\|d_k^\o\|^2}  \\
& = \left\| \sum_{\o \in J_k^+} (a_\o v_k + b_\o) a_\o \right \|^2 \Big{/} N \sum_{\o \in J_k^+} (a_\o v_k + b_\o)^2  \\
& = \left\|   A_{J_k^+}^T ( A_{J_k^+} v_k + b_{J_k^+})  \right\|^2/ N \| A_{J_k^+} v_k + b_{J_k^+} \|^2 \\
& \leq \frac{\lambda_{\max} (A_{J_k^+} A_{J_k^+}^T)}{N}   \leq \frac{\lambda_{\max} (A_{J_k} A_{J_k}^T)}{N} \\
& < \frac{\text{Trace}(A_{J_k} A_{J_k}^T)}{N} =  1  \quad \forall k,
\end{align*}
where the first inequality follows from the definition of the maximal eigenvalue $\lambda_{\max}$ of a matrix, the second inequality follows from the fact that $J_k^+ \subseteq J_k$, and the third  inequality holds strictly provided that the submatrix $A_{J_k}$ has at least rank two.   In conclusion, if the matrix $A$ has e.g. full row rank and consider a sampling of $J_k$ based on a given probability $\textbf{P}$, then $L_N$ satisfies:
\begin{align}
\label{eq:LNlin}
L_N = \max_{ J \in 2^\A, |J| = N, J \sim \textbf{P}}  \frac{\lambda_{\max} (A_{J} A_{J}^T)}{N} <1.
\end{align}
Note that for particular sampling rules we can compute $L_N$  efficiently, such as when we consider a uniform distribution over a fixed partition of $\A = \cup_{i=1}^\ell J_i$ of equal size. The reader may find other examples of functional constraints satisfying $L_N<1$ and we believe that this paper opens a window of opportunities for algorithmic research in this direction.


\section{Sequential random minibatch subgradient algorithm}
\label{sec:convergence_diff_seq}
In this section we consider a \textit{sequential} variant of the algorithm  \eqref{eq-method} defined in terms of the following iterative process:

\begin{center}
\framebox{
\parbox{11 cm}{
\begin{center}
\textbf{ Algorithm (sequential case) }
\end{center}
Choose $x^0 \in Y$, minibatch size $N \ge 1$, and 
stepsizes $\alpha_k >0$ and $\b > 0$. For $k \geq 1$ repeat:
\begin{enumerate}
\item Draw $N$  samples $J_k =\{ \o^k_1, \cdots, \o^k_N \}  \sim \textbf{P}$.
\item Compute the following updates: 
\begin{subequations}
\label{eq-method-sequential}
 \begin{eqnarray}
   && v_k = \Pi_Y[x_{k-1} - \a_{k-1} s_f(x_{k-1})],\label{eq-vup}\\
   && z_{k}^0=v_k,  \;  z_{k}^i   =\Pi_Y \! \left[z_{k}^{i-1} - \beta\, \frac{g^+_{\o_{k}^{i}}(z_{k}^{i-1})}{\|d_{k}^{i}\|^2}\, d_{k}^{i} \right]  \; \hbox{for } i\!=\!1\!:\!N, \label{eq-zseq}\\
   && x_k= z_{k}^{N}\label{eq-xup}.
  \end{eqnarray}
  \end{subequations}
\end{enumerate}
}}
\end{center}

\noindent This method takes, as for the parallel variant, one subgradient step for the objective function,  followed by $N$ \textit{sequential}  feasibility updates.  As before,  the vector $d_{k}^{i}$ is chosen as $d_{k}^{i} \in \partial g^+_{\o_{k}^{i}}(z_{k}^{i-1})$ if $g^+_{\o_{k}^{i}}(v_{k})>0$,  and $d_{k}^{i}=d$  for some $d \ne 0$ if $g^+_{\o_{k}^{i}}(z_{k}^{i-1})=0$.  Note that in this variant, the feasibility updates use the projection on $Y$ in order to confine the intermittent iterates $z_k^i$  and $x_k$ to the set $Y$, where $g_\o$'s and $f$ (for the last step)  are assumed to have uniformly bounded subgradients.

\noindent In this section we analyze the convergence properties of this new algorithm \eqref{eq-method-sequential}. Given $x_{k-1}$, the update of $v_k$ is the same as in the parallel method \eqref{eq-method}, thus Lemma~\ref{lem-viter}  still applies here.  We  need an analog of Lemma~\ref{lem-xiter1}.

\begin{lemma}
\label{lem-xiter-sequential}
Let Assumptions~\ref{asum-base}(a) and~\ref{asum-base}(c) hold. Let $x_{k}$ be generated by algorithm \eqref{eq-method-sequential} with $\beta \in (0, \; 2)$. Then, the following relations are valid:
\begin{align*}
& \dist^2(z_{k}^i,X) \le  \dist^2(z_{k}^{i-1},X)  - \frac{\b(2-\beta)}{M_g^2}\,(g_{\o_{k}^{i}}^+(z_k^{i-1}))^2   \hbox{ for all } i=1,\ldots,N,   \\
& \|x_k - y\|^2 \le  \|v_{k} - y\|^2 -\frac{\b(2-\beta)}{M_g^2}\,\sum_{i=1}^N (g_{\o_{k}^{i}}^+(z_k^{i-1}))^2  \qquad \hbox{for all } y \in X,\\
& \dist^2(x_k,X) \le  \dist^2(v_{k},X) - \frac{\b(2-\beta)}{M_g^2}\, \sum_{i=1}^N (g_{\o_{k}^{i}}^+(z_k^{i-1}))^2 \qquad \hbox{for all } k \geq 1.
\end{align*}
\end{lemma}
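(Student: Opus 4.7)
The plan is to apply Lemma~\ref{lemma:basiter} to each of the $N$ sequential projection steps in \eqref{eq-zseq}, but with $Z=Y$ rather than with $Z=\R^n$ as was done in the proof of Lemma~\ref{lem-xiter}. This is the only notable structural difference between the sequential and the parallel cases: the sequential scheme explicitly projects every intermediate $z_k^i$ onto $Y$, so $Y$ is the correct choice for $Z$. The choice is legitimate because $Y$ is closed and convex by Assumption~\ref{asum-base}(a), because $X\subseteq Y$ (from $X=Y\cap(\cap_\o X_\o)$), and because any $y\in X$ satisfies $g_{\o_k^i}^+(y)=0$ for every $\o_k^i$, so such a $y$ is a valid reference point $\bar z$ in Lemma~\ref{lemma:basiter}.

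For the first inequality I would fix $i\in\{1,\ldots,N\}$ and take the reference $\bar y=\Pi_X[z_k^{i-1}]\in X\subseteq Y$. Applying Lemma~\ref{lemma:basiter} with $v=z_k^{i-1}$, $d=d_k^i$, $Z=Y$, $\bar z=\bar y$, and then using $\|d_k^i\|\le M_g$ from Assumption~\ref{asum-base}(c) to lower-bound $1/\|d_k^i\|^2$ by $1/M_g^2$, yields
\[
\|z_k^i-\bar y\|^2 \le \|z_k^{i-1}-\bar y\|^2 - \frac{\b(2-\b)}{M_g^2}\,(g_{\o_k^i}^+(z_k^{i-1}))^2.
\]
The first stated inequality then follows from $\dist^2(z_k^i,X)\le\|z_k^i-\bar y\|^2$ (since $\bar y\in X$) and $\|z_k^{i-1}-\bar y\|^2=\dist^2(z_k^{i-1},X)$ by the definition of $\bar y$.

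For the second inequality I would fix an arbitrary $y\in X$ and apply Lemma~\ref{lemma:basiter} with $\bar z=y$ at each step $i=1,\ldots,N$, which is valid because $g_{\o_k^i}^+(y)=0$. Telescoping the $N$ resulting inequalities over $i$, using $z_k^0=v_k$, $z_k^N=x_k$, and $\|d_k^i\|\le M_g$, gives exactly
\[
\|x_k-y\|^2 \le \|v_k-y\|^2 - \frac{\b(2-\b)}{M_g^2}\sum_{i=1}^N (g_{\o_k^i}^+(z_k^{i-1}))^2.
\]
Finally, the third inequality is an immediate consequence of the second upon specializing $y=\Pi_X[v_k]\in X$ and noting that $\|v_k-\Pi_X[v_k]\|^2=\dist^2(v_k,X)$ while $\|x_k-\Pi_X[v_k]\|^2\ge\dist^2(x_k,X)$. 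There is no real obstacle here beyond careful bookkeeping in the telescoping; all three bounds reduce to repeated invocations of the same one-line descent estimate from Lemma~\ref{lemma:basiter}, with the only subtlety being the replacement $Z=\R^n\leadsto Z=Y$ dictated by the sequential scheme.
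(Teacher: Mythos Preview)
Your proposal is correct and follows essentially the same route as the paper: apply Lemma~\ref{lemma:basiter} with $Z=Y$ to each sequential step, use the subgradient bound $\|d_k^i\|\le M_g$, then specialize or telescope. The only cosmetic difference is that the paper first records the single-step bound for an arbitrary $y\in X$ and then ``takes the minimum over $y\in X$'' to get the distance relations, whereas you plug in the specific projection point directly---these are equivalent.
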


\begin{proof}
We start with the definition of $z_{k}^i$ in~\eqref{eq-zseq} and Lemma~\ref{lemma:basiter}, with $Z=Y$. Thus, we obtain   for all $y\in X$ (which satisfies $g_{\o_{k}^i}^+(y)=0$
      for any realization of $\o_{k}^i$) and for all $i=1,\ldots,N$,
      \[\|z_{k}^i - y\|^2 \le  \|z_{k}^{i-1} - y\|^2
       -\b(2-\beta)\,\frac{(g_{\o_{k}^{i}}^+(z_k^{i-1}))^2}{\|d_k^{i}\|^2}.\]
       By using $\|d_k^i\|^2 \le M_g^2$, we have for all $i=1,\ldots,N$,
       \be\label{eq-zse}
       \|z_{k}^i - y\|^2 \le  \|z_{k}^{i-1} - y\|^2
       -\frac{\b(2-\beta)}{M_g^2} \, (g_{\o_{k}^{i}}^+(z_k^{i-1}))^2.\ee
       The distance relation for $z$-iterates follows by taking the minimum over $y\in X$ on
       both sides of inequality~\eqref{eq-zse}. By summing relations~\eqref{eq-zse} over $i=1,\ldots,N$,  and by using $z^0_k=v_k$ and $z^N_k=x_k$, we obtain for any $y\in X$,
       \[\|x_k - y\|^2 \le  \|v_{k} - y\|^2
       -\frac{\b(2-\beta)}{M_g^2}\,
       \sum_{i=1}^N (g_{\o_{k}^{i}}^+(z_k^{i-1}))^2.\]
The distance relation follows by taking the minimum over $y\in X$ on
       both sides of the preceding inequality.  \hfill$\square$
       \end{proof}

\noindent Taking $\rho=1/2$ in  Lemma~\ref{lem-viter} we get:
\begin{eqnarray*}
     \|v_{k+1} - x^*\|^2  &+ & \a_k\left(f(\Pi_X[x_k])- f^*\right)
     \le \left(1  - \frac{\a_k\mu}{2}\right) \|x_k - x^*\|^2\cr
     && + 2 \a_k M_f \|\Pi_X[x_k] - x_k\| +\a_k^2M_f^2,
     \end{eqnarray*}
and  using the inequality for $\|x_k-y\|^2$ from Lemma~\ref{lem-xiter-sequential} in $y=x^*$, yields:
\begin{eqnarray}
\label{eq:seqalg1}
&& \|v_{k+1}-x^*\|^2  +  \a_k\left(f(\Pi_X[x_k])- f^*\right)
     \le \left(1- \frac{\a_k\mu}{2}\right) \|v_k - x^*\|^2 \\
     &&-\left(1- \frac{\a_k\mu}{2}\right)
     \frac{\b(2-\beta)}{M_g^2}\,\sum_{i=1}^N (g_{\o_{k}^{i}}^+(z_k^{i-1}))^2  + 2  \a_k M_f\|\Pi_X[x_k] - x_k\| +\a_k^2M_f^2. \nonumber
\end{eqnarray}

\noindent Taking the conditional expectation on $\F_{k-1}$ and $z_k^{i-1}$, and using
Assumption~\ref{asum-regularmod}, give
\[\EXP{g_{\o_{k}^{i}}^+(z_k^{i-1}))^2\mid\F_{k-1}, z_k^{i-1}}
     \ge \frac{1}{c}\dist(z_k^{i-1},X).\]
Using the iterated expectation rule, we obtain
\be\label{eq-startexpest}
\EXP{g_{\o_{k}^{i}}^+(z_k^{i-1}))^2\mid\F_{k-1}}
=  \EXP{\EXP{g_{\o_{k}^{i}}^+(z_k^{i-1}))^2\mid\F_{k-1}, z_k^{i-1}}}
\ge \frac{1}{c}\EXP{\dist(z_k^{i-1},X)},\ee
which, when combined with the distance relation of Lemma  \ref{lem-xiter-sequential} gives
     for all $i=1,\ldots,N$
\[  \EXP{\dist^2(z_{k}^i,X)   \mid\F_{k-1}} \le \left( 1-\frac{\b(2-\beta)}{c M_g^2}\right)
      \EXP{\dist^2(z_{k}^{i-1},X) \mid\F_{k-1}}.\]

\noindent Recall  that $c M_g^2 \geq 1$ according to Lemma \ref{lema:c}.  In the subsequent analysis we consider the non-trivial case $cM_g^2 >1$. The ideal case  $cM_g^2 =1$ will allow to get  feasibility in expectation  in one step and obtain a similar convergence rate result as in Remark  \ref{rem:idealcase}.  Hence, using the definition of $x_k$, i.e., $x_k=z_k^N$, and letting $q=\frac{\b(2-\beta)}{c M_g^2} \in (0,1)$ (since we assume  $cM_g^2 >1$ and $\b \in (0,2)$),  we have for all $i=1,\ldots,N$,
      \[\EXP{\dist^2(x_k,X)   \mid\F_{k-1}}\le
      (1-q)^{N-i+1} \EXP{\dist^2(z_{k}^{i-1},X) \mid\F_{k-1}},\]
      implying that for all $i=1,\ldots,N$,
      \be\label{eq-almexpest}
      \EXP{\dist^2(z_{k}^{i-1},X) \mid\F_{k-1}}\ge
      \frac{1}{(1-q)^{N-i+1}}\EXP{\dist^2(x_k,X)\mid\F_{k-1}},\ee
     From~\eqref{eq-startexpest} and~\eqref{eq-almexpest}
     for all $i=1,\ldots,N$,
     \[ \EXP{g_{\o_{k}^{i}}^+(z_k^{i-1}))^2\mid\F_{k-1}}
     \ge \frac{1}{c}\frac{1}{(1-q)^{N-i+1}}\EXP{\dist^2(x_k,X)\mid\F_{k-1}}.\]
     By summing over $i$
     \[\sum_{i=1}^N \EXP{g_{\o_{k}^{i}}^+(z_k^{i-1}))^2\mid\F_{k-1}}
     \ge \frac{1}{c}
     \left(\sum_{i=1}^N \frac{1}{(1-q)^{N-i+1}}\right)\EXP{\dist^2(x_k,X)\mid\F_{k-1}}.\]
However,
     \[\sum_{i=1}^N \frac{1}{(1-q)^{N-i+1}}
     =\frac{1}{(1-q)^{N+1}}\sum_{i=1}^N (1-q)^i
     =\frac{1- (1-q)^N}{q(1-q)^N}.\]
Finally, we get
      \[ \sum_{i=1}^N \EXP{g_{\o_{k}^{i}}^+(z_k^{i-1}))^2\mid\F_{k-1}}
     \ge \frac{1}{c}\frac{\left(1- (1-q)^N\right)}{q(1-q)^N}  \EXP{\dist^2(x_k,X)\mid\F_{k-1}}  \]
and consequently
\begin{align*}
& \frac{\b(2-\beta)}{M_g^2} \, \sum_{i=1}^N \EXP{(g_{\o_{k}^{i}}^+(z_k^{i-1}))^2 \mid \F_{k-1}} \\
     & \ge \frac{\b(2-\beta)}{c M_g^2 } \frac{\left(1- (1-q)^N\right)}{q(1-q)^N}
     \EXP{\dist^2(x_k,X)\mid\F_{k-1}}\cr
     &= q\frac{\left(1- (1-q)^N\right)}{q(1-q)^N}
     \EXP{\dist^2(x_k,X)\mid\F_{k-1}}\cr
     &=\frac{\left(1- (1-q)^N\right)}{(1-q)^N}
     \EXP{\dist^2(x_k,X)\mid\F_{k-1}}\cr
     &=\left( (1-q)^{-N}-1\right)  \EXP{\dist^2(x_k,X)\mid\F_{k-1}}.
     \end{align*}

\noindent  Let us denote   $b_N^s= (1-q)^{-N}-1$. It is clear that $b_N^s \to \infty$ as $N \to \infty$. Taking expectation in \eqref{eq:seqalg1} and using the previous inequality we get an analog of Lemma \ref{lem-mainiter}:
\begin{eqnarray*}
&&\EXP{ \|v_{k+1} - x^*\|^2 \mid \F_{k-1}}  +  \a_k\EXP{\left(f(\Pi_X[x_k])- f^*\right) \mid \F_{k-1}} \cr
&& \le \left(1 - \frac{\a_k\mu}{2}\right)\|v_k - x^*\|^2  - \left( \left(1 - \frac{\a_k\mu}{2}\right) b_N^s - \eta \right) \EXP{\dist^2(x_{k}, X) \mid \F_{k-1}} \cr
&& \qquad + \a_k^2 \left( 1+ \frac{1}{ \eta} \right)M_f^2,
\end{eqnarray*}
for any $\eta >0$.   Let us consider the same stepsize  as for the parallel scheme, i.e. $\a_k = \frac{2}{\mu} \g_k$,  choose $\eta= \frac{1}{2}\left(1 - \frac{\a_k\mu}{2}\right) b_N^s > 0$, and take the full expectation, to get the following recurrence (analog to Theorem \ref{thm-exp-basic}):
 \begin{eqnarray*}
&&\EXP{ \|v_{k+1} - x^*\|^2}  +  \frac{2}{\mu} \g_k \EXP{\left(f(\Pi_X[x_k])- f^*\right)}    \le \left(1 - \g_k \right) \EXP{\|v_k - x^*\|^2}  \cr
&&  - \frac{1}{2}\left(1 - \g_k\right) b_N^s  \EXP{\dist^2(x_{k}, X)}   +\frac{4}{\mu^2}\g_k^2 \left(1+ \frac{2}{(1-\g_k) b_N^s}\right)M_f^2.
\end{eqnarray*}
Using now $\g_k = \frac{2}{k+1}$, then   $1-\g_k\ge \frac{1}{3}$ and we get:
\begin{eqnarray}
 \label{eq-relgkexp-seqalg}
&&\EXP{ \|v_{k+1} - x^*\|^2}  +  \frac{2}{\mu} \g_k \EXP{\left(f(\Pi_X[x_k])- f^*\right)}    \le \left(1 - \g_k \right) \EXP{\|v_k - x^*\|^2}  \cr
&&  - \frac{1}{6} b_N^s
       \EXP{\dist^2(x_{k}, X)}   +\frac{4}{\mu^2}\g_k^2 \left(1+ \frac{6}{b_N^s}\right)M_f^2.
\end{eqnarray}

\noindent  Since, $\frac{1- \g_k}{\g_k^2}\le \frac{1} {\g_{k-1}^2}$ for all  $k\ge1$,  dividing~\eqref{eq-relgkexp-seqalg} by $\g_k^2$ and using the preceding inequality we have for all $k\ge1$:
\begin{eqnarray*}
&&\g_k^{-2} \EXP{ \|v_{k+1} - x^*\|^2}  +  \frac{2}{\mu}\g_k^{-1}\EXP{\left(f(\Pi_X[x_k])- f^*\right)} \cr
&& + \frac{\g_k^{-2}}{6} b_N^s \EXP{\dist^2(x_{k}, X)}  \le \g_{k-1}^{-2}\EXP{\|v_k - x^*\|^2} +\frac{4}{\mu^2}\left(1+ \frac{6}{b_N^s}\right)M_f^2.
\end{eqnarray*}
Summing these over $k=1,\ldots, t$, for some $t>0$, we obtain the following recurrence relation for the algorithm \eqref{eq-method-sequential}:
       \begin{eqnarray}
       \label{eq:mainseq0}
       &&\g_t^{-2}
       \EXP{ \|v_{t+1} - x^*\|^2}
       +  \frac{2}{\mu}\sum_{k=1}^t\g_k^{-1}\EXP{\left(f(\Pi_X[x_k])- f^*\right)} \\
      &&  + \frac{b_N^s}{6} \sum_{k=1}^t  \g_k^{-2}\EXP{\dist^2(x_{k}, X)}  \le \g_{0}^{-2}\EXP{\|v_1 - x^*\|^2}
       +t \frac{4}{\mu^2} \left(1+ \frac{6}{b_N^s} \right)M_f^2. \nonumber
       \end{eqnarray}
Using the same definition for the weighted averages $\hat w_t$ and $\hat x_t$ from \eqref{eq:paravseq} and $\g_k =\frac{2}{k+1}$ in  \eqref{eq:mainseq0}, we get the main recurrence for the sequential  variant \eqref{eq-method-sequential}:
\begin{eqnarray}\label{eq:mainseq}
       &&\frac{(t+1)^2}{4}
       \EXP{ \|v_{t+1} - x^*\|^2}
       +  \frac{S_t}{(t+1)\mu}\EXP{\left(f(\hat w_t)- f^*\right)} + \frac{b_N^s S_t}{24} \EXP{\|\hat w_t - \hat x_t\|^2} \cr
       &&\le \frac{1}{4}\EXP{\|v_1 - x^*\|^2} +\frac{4 t}{\mu^2}\left(1+ \frac{6}{b_N^s}\right)M_f^2.
       \end{eqnarray}

\noindent Next theorem summarizes the convergence rates that follow from the  recurrence relation  \eqref{eq:mainseq} of the sequential  algorithm \eqref{eq-method-sequential}.
\begin{theorem}
Let Assumption~\ref{asum-base} and  Assumption~\ref{asum-regularmod} hold and the stepsizes    $\b\in(0,2)$ and $\a_k = \frac{4}{\mu(k+1)}$. Let also $q= \frac{\b(2-\b)}{c M_g^2} <1$ and $b_N^s= (1-q)^{-N}-1$. Then, the following sublinear  rates for suboptimality and feasibility violation hold for the average sequence $ \hat x_t$ from \eqref{eq:paravseq} generated by  the sequential algorithm~\eqref{eq-method-sequential}:
\[    \EXP{|f(\hat x_t) - f^*| } \leq {\cal O} \left( \frac{1}{t}  + \frac{1}{\sqrt{b_N^s} t} \right), \quad  \EXP{\dist_X(\hat x_t)}  \leq {\cal O} \left( \frac{1}{\sqrt{b_N^s} t} \right).   \]
\end{theorem}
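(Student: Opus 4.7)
The plan is to mirror the proof of Theorem~\ref{theorem:mainparalg} almost verbatim, since the recurrence~\eqref{eq:mainseq} derived for the sequential algorithm has the exact same structural form as~\eqref{eq-relgkexp6}, with the only change being that $b_N^p = (1-q_N)^{-1}-1$ is replaced by $b_N^s = (1-q)^{-N}-1$, where $q=\b(2-\b)/(c M_g^2) \in (0,1)$.

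First, I would read off from~\eqref{eq:mainseq} two separate estimates by dropping the other nonnegative terms on the left-hand side and using $S_t = \sum_{k=1}^t (k+1)^2 \sim \mathcal{O}(t^3)$. Dividing the second term on the left by $S_t/((t+1)\mu) \sim \mathcal{O}(t^2)$ yields
\[
\EXP{f(\hat w_t) - f^*} \leq \mathcal{O}\!\left(\frac{1}{t} + \frac{1}{b_N^s\, t}\right),
\]
and dividing the third term on the left by $b_N^s S_t/24 \sim \mathcal{O}(b_N^s t^3)$ yields
\[
\EXP{\|\hat w_t - \hat x_t\|^2} \leq \mathcal{O}\!\left(\frac{1}{b_N^s\, t^2}\right).
\]

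Next, since $\hat w_t$ is a convex combination of the projections $\Pi_X[x_k]\in X$, we have $\hat w_t \in X$, so by Jensen's inequality applied to the concave square root,
\[
\EXP{\dist_X(\hat x_t)} \leq \EXP{\|\hat x_t - \hat w_t\|} \leq \sqrt{\EXP{\|\hat w_t - \hat x_t\|^2}} \leq \mathcal{O}\!\left(\frac{1}{\sqrt{b_N^s}\, t}\right),
\]
which gives the stated feasibility rate. For the suboptimality rate, I would use that $\hat x_t \in Y$ (as a convex combination of iterates $x_k \in Y$) and $\hat w_t \in X \subset Y$, together with the subgradient boundedness of $f$ on $Y$ from Assumption~\ref{asum-base}(b), to obtain
\[
\EXP{|f(\hat x_t) - f(\hat w_t)|} \leq M_f\, \EXP{\|\hat x_t - \hat w_t\|} \leq \mathcal{O}\!\left(\frac{1}{\sqrt{b_N^s}\, t}\right).
\]
Combining this with the bound on $\EXP{f(\hat w_t)-f^*}$ via the triangle inequality yields the stated suboptimality rate.

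Since all the algebraic steps are identical to those in the proof of Theorem~\ref{theorem:mainparalg}, there is no serious obstacle; the only conceptual point worth emphasizing is that the quantitative improvement now comes from the \emph{exponential} dependence of $b_N^s = (1-q)^{-N}-1$ on the minibatch size $N$, rather than through the parameter $L_N$. This explains why, in contrast to the parallel variant, minibatching in the sequential scheme always helps regardless of the geometry of the constraints. \hfill$\square$
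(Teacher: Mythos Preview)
Your proposal is correct and follows essentially the same approach as the paper's proof: both read off the two estimates from the recurrence~\eqref{eq:mainseq}, then invoke $\hat w_t\in X$, Jensen's inequality, and the subgradient bound on $f$ exactly as in the proof of Theorem~\ref{theorem:mainparalg}. The paper's proof is in fact slightly more terse, simply pointing back to that earlier argument, whereas you spell out the steps; there is no substantive difference.
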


\begin{proof}
 Defining the same average sequences $\hat w_t$ and $\hat x_t$ as in \eqref{eq:paravseq}, we get the following convergence rates (omitting the constants but keeping the terms depending on $b_N^s$):
\[ \EXP{f(\hat w_t) - f^*)} \leq {\cal O} \left( \frac{1}{t} + \frac{1}{b_N^s t} \right), \qquad \EXP{\|\hat w_t - \hat x_t\|^2}   \leq {\cal O} \left( \frac{1}{b_N^s t^2 } \right). \]
Hence, we get the following convergence rate for the feasibility violation of the constraints that depends explicitly on the minibatch size $N$ via the term $b_N$:
\[   \EXP{\dist_X^2(\hat x_t)}  \leq    {\cal O} \left( \frac{1}{b_N^s t^2} \right).  \]
Using the same reasoning as in the proof of Theorem \ref{theorem:mainparalg},  we  also get the following convergence rate for suboptimality:
\[   \EXP{|f(\hat x_t) - f^*| } \leq {\cal O} \left( \frac{1}{t}  + \frac{1}{\sqrt{b_N^s} t} \right),  \]
which proves the statements of the theorem.  \hfill$\square$
\end{proof}

\noindent We observe that also for the sequential algorithm \eqref{eq-method-sequential} the convergence estimate for the feasibility violation depends explicitly on the minibatch size $N$ via the term $b_N^s$ (recall that  $b_N^s \to \infty$ as $N \to \infty$).  Since $b_N^s$ is an increasing sequence in $N$, it follows that  the larger is the minibatch size $N$ the better is also the complexity of the sequential algorithm~\eqref{eq-method-sequential} in terms of constraints feasibility.  In conclusion, for the sequential variant our rates prove that minibatching always helps and the feasibility  estimate  depends exponentially  on the minibatch size $N$.   On the other hand, the suboptimality estimate contains a term which does not depend on the minibatch size $N$ as it happens for feasibility violation  estimate.  Recall that for the parallel algorithm we proved that  minibatching works only for $L_N < 1$ and the estimates depend linearly on $L_N$. 


\section{Extensions}  
\label{sec:extensions}
\noindent In this section we discuss some possible extensions of the framework presented  in this paper related to the objective function, algorithms and stepsizes. Some of these extensions will be considered in our future work. \\
  
\noindent First, from our convergence analysis it is easy to note that the derivations still remain valid for a larger  class of objective functions  in  the model \eqref{eq-problem}.  More precisely,  we can replace  the boundedness on the subgradients of $f$ (Assumption~\ref{asum-base}(b)), i.e.   $ \|s_f(x)\| \le M_f$,  with a more general assumption, that is there exist two constants $M_{f,1}, M_{f,2} \geq 0$  such that the (sub)gradients of $f$ satisfy the following inequality:
 \[ \|s_f(x)\| \le M_{f,1} + M_{f,2} \| x - x^*\| \quad \forall  s_f(x) \in \partial f(x) \; \hbox{and} \;  x \in Y, x^* \in X^*.  \]
Clearly, this condition covers the class of functions with bounded subgradients, e.g.  take $M_{f,2}=0$, and also the class of functions with Lipschitz continuous gradients \cite{Nes:04}. Indeed, if there is $L_f>0$ such that the gradients $\nabla f$ satisfy: 
$$\|\nabla f(x) - \nabla f(y)\| \leq L_f \|x - y\| \quad  \forall x, y \in Y, $$ 
then we immediately get
$$ \|\nabla f(x)\| \leq \|\nabla f(x^*)\| +  \| \nabla f(x) - \nabla f(x^*)  \| \leq \|\nabla f(x^*)\| + L_f \| x - x^*  \|, $$ which proves our inequality for $M_{f,1} = \max_{x \in X^*} \|\nabla f(x^*)\|$ and $M_{f,2} = L_f$.  Our convergence analysis can be easily adapted  for this more general assumption, however, the recurrence relations will be more cumbersome.  For example, the recurrence from Lemma \ref{lem-viter} becomes now:
\begin{align*}
& \|v_{k+1} - x^*\|^2  +  2\a_k(1-\rho)\left(f(\Pi_X[x_k])- f^*\right)  \\ 
& \quad \le  \left(1 - \a_k \left(\rho\mu -2(1-\rho) M_{f,2} \right)  + 2 \a_k^2 M_{f,2}^2 \right) \|x_k - x^*\|^2 \\
&\qquad    + 2\a_k(1 -\rho) M_{f,1} \|\Pi_X[x_k] - x_k\|  + 2 \a_k^2 M_{f,1}^2.
\end{align*}

\noindent Second, when the objective function $f$ has an easy proximal operator  we can replace the  subgradient steps \eqref{eq-grad-update} and  \eqref{eq-vup} by a proximal point step:
\[   v_k  = \text{prox}_{\alpha_{k-1} f} (x_{k-1} )  = \arg \min_{y} f(y) + \frac{1}{2 \alpha_{k-1} } \|  y  -  x_{k-1}  \|^2.   \]
An algorithm combining the proximal point step with a single feasibility step (i.e., $N=1$) has been considered in  \cite{PatNec:17} and convergence rates of order $\mathcal{O}(1/t)$   have been  proved provided that the objective function is smooth (i.e., it has  Lipschitz continuous  gradient)  and strongly convex. Note that it is easy to extend that convergence analysis to the minibatch settings following the framework developed in this paper. \\

\noindent Third extension is still related to the objective function,  by considering $f$ in the  composite form, i.e.:
\[  f(x) = f_{1}(x) + f_{2}(x),  \]
where $f_1$ is smooth and $f_2$ can be non-smooth but  admits an easy proximal operator. Note that  if the set  $Y$ is present in the optimization model \eqref{eq-problem}, then it can be included in $f_2$ as the indicator function.  For this composite objective function, steps \eqref{eq-grad-update} and  \eqref{eq-vup} can be replaced by:
\begin{align*}
 v_k & = \text{prox}_{\alpha_{k-1} f_{2}} (x_{k-1}  - \alpha_{k-1} \nabla f_{1}(x_{k-1})) \\
   & = \arg \min_{y} f_2(y) + \frac{1}{2 \alpha_{k-1} } \|  y  - ( x_{k-1}  - \alpha_{k-1} \nabla f_{1}(x_{k-1})) \|^2. 
\end{align*} 
Note that for  $ f_{2}(x) = 1_{Y}(x)$, the indicator function of the convex set  $Y$, we recover the updates \eqref{eq-grad-update} and  \eqref{eq-vup}. Hence, it will be interesting to extend our  convergence analysis to this general composite  objective function $f$.\\   

\noindent Finally, in the parallel algorithm (see \eqref{eq-method}) the feasibility steps depend on an extrapolated  stepsize $\beta \in (0, \, 2/L_N)$.  When $L_N$ cannot be computed explicitly, we propose to approximate it online with $L_N^k$, and use  at each iteration an adaptive extrapolated  stepsize  $\beta_k$ of the form:
\begin{align}
\label{eq:betak}
\beta_k =  \frac{2-\delta}{L_N^k}   =       \frac{2-\delta}{N} \sum_{i=1}^N\frac{(g_{\o_{k}^{i}}^+(v_k))^2}{\|d_k^i\|^2} \Big{/} \left\| \frac{1}{N}\sum_{j=1}^N\frac{g_{\o_{k}^{j}}^+(v_k)}{\|d_k^j\|^2}\, d_k^j\right\|^2,
\end{align}
for some $\delta \in (0, \;  2)$ sufficiently  small. The convergence rate of the parallel algorithm  for this adaptive choice \eqref{eq:betak} of the stepsize $\beta_k$ will be analyzed in our future work (see e.g.,  \cite{NecNed:19} for some preliminary results related to the convex feasibility problem).


\section{Preliminary numerical results} 
\label{sec:sim}
\noindent Many data-driven optimization applications can be formulated as  convex optimization problems with the objective function composed of a quadratic term and a regularizer and constraints (so-called \textit{constrained Lasso}) of the form:
\[  \min_{x \in [l, \, u]}  \|Hx -y\|^2 +  \lambda \|Dx\|_1   \quad \text{s.t.}   \quad Ax+b \leq 0,  \]
where the problem is parametrised by the data (measurements) $y$, $H$ is an appropriate linear operator (e.g.,  the forward operator,  the circular convolution)   and $D$ is another linear operator (e.g., the identity, the finite difference or the  Wavelet transform). Additionally, we impose constraints of the form $x \in X=[l, \; u] \cap \{ x:  \; Ax + b \leq 0 \}$, where $A \in \R^{m \times n}$. The constrained Lasso arises e.g., in image deblurring or denoising, computerised tomography or some  inverse problems, see.e.g  \cite{BotHei:12}.  Note that for this formulation the strong convexity Assumption~\ref{asum-base}(b) holds for full column  matrices $H$ (see e.g., \cite{NecNes:15}) and also the  linear regularity Assumption~\ref{asum-regularmod}  holds (see e.g.  \cite{NecRic:18}). Moreover, the set $Y=[l,\, u]$ is compact so that the objective function has bounded subgradients and the functional constraints $g_\o (x) = a_\o^T x + b_\o$ are linear and, consequently, Assumption~\ref{asum-base} also holds.

\noindent In our experiments we use synthetic data, where $H$ is Toeplitz-like matrix and $D$ the finite difference operator (as in image deblurring \cite{BotHei:12}). We also generate $A$ randomly, with $m=3n$ constraints. We consider a partition of $\A =\{1, 2, \cdots, m\} $ of equal size $N$, i.e., $\A=\cup_{i=1}^\ell J_i$. Hence, $m= N \cdot \ell$. We compute $L_N$ as in \eqref{eq:LNlin} for this partition. We consider full iterations, i.e. we plot the behavior of the algorithms over epochs  $t N/m$ (number of passes over all the rows of matrix $A$).

\noindent In the first set of experiments we compare the parallel  (see \eqref{eq-method}) and sequential (see \eqref{eq-method-sequential}) algorithms for different minibatch sizes $N=1, 50$ and $100$ on a constrained Lasso problem with $n=10^3$.  The plots in Fig. 1 present the convergence behavior of these algorithms in terms of  feasibility violation of the average point over full iterations $t N/m$: parallel algorithm (left) and sequential algorithm (right). As we can see from Fig. 1, increasing the minibatch size $N$ usually leads to better convergence for both algorithms. 
\begin{figure}
\label{fig_lcth} 
\includegraphics[height=5cm,width=6.2cm]{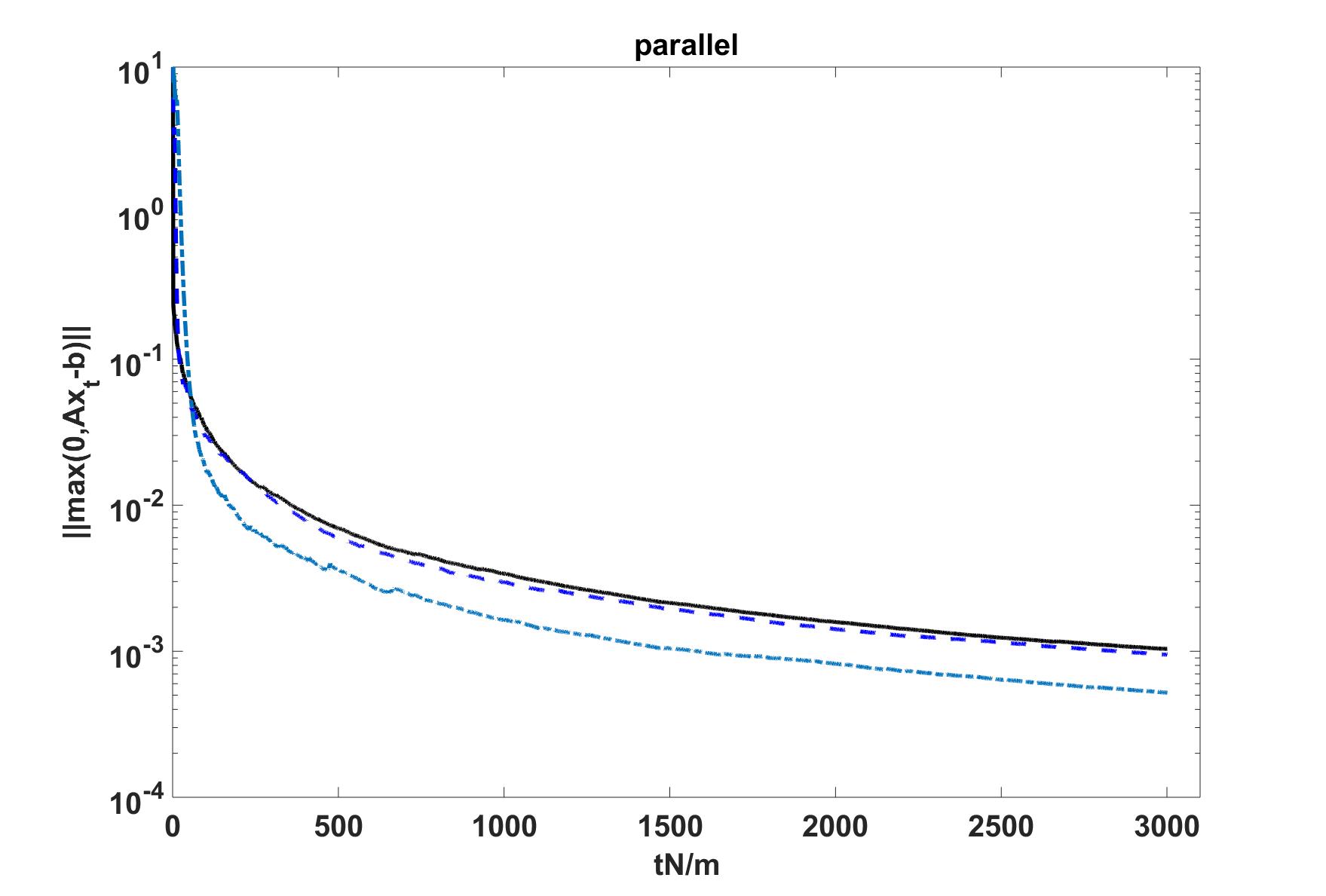}
\hspace*{-0.5cm}
\includegraphics[height=5cm,width=6.2cm]{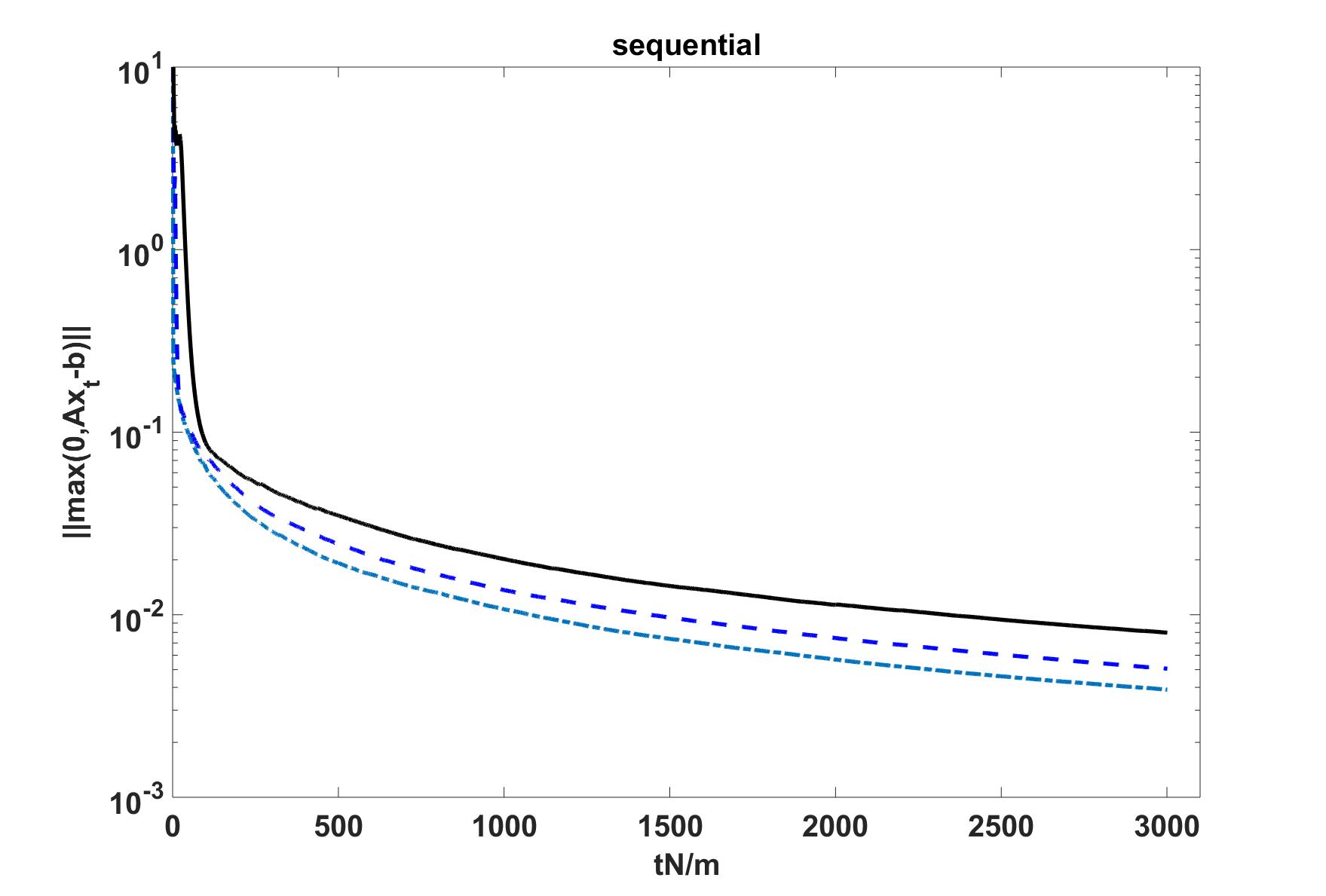}
\caption{Convergence  of  minibatch  parallel (left)  and sequential (right)  algorithms for different minibatch sizes: $N=1$ (solid), $N=50$ (dashed) and $N=100$ (dot-dashed). }
\end{figure}

\noindent Then, we   compare  the parallel algorithm  with the extrapolated stepsize $\b=1.9/L_N$ and  the sequential algorithm  with $\b=1.9$. The results on a problem of dimension $n=10^3$ and minibatch size  $N=10$ are displayed in Fig. 2:  suboptimality (left) and feasibility violation (right) in the average point over full iterations. We observe a faster convergence for the sequential algorithm, as our theory also predicted. 
\begin{figure}
\label{fig_lcth} 
\includegraphics[height=5cm,width=6.2cm]{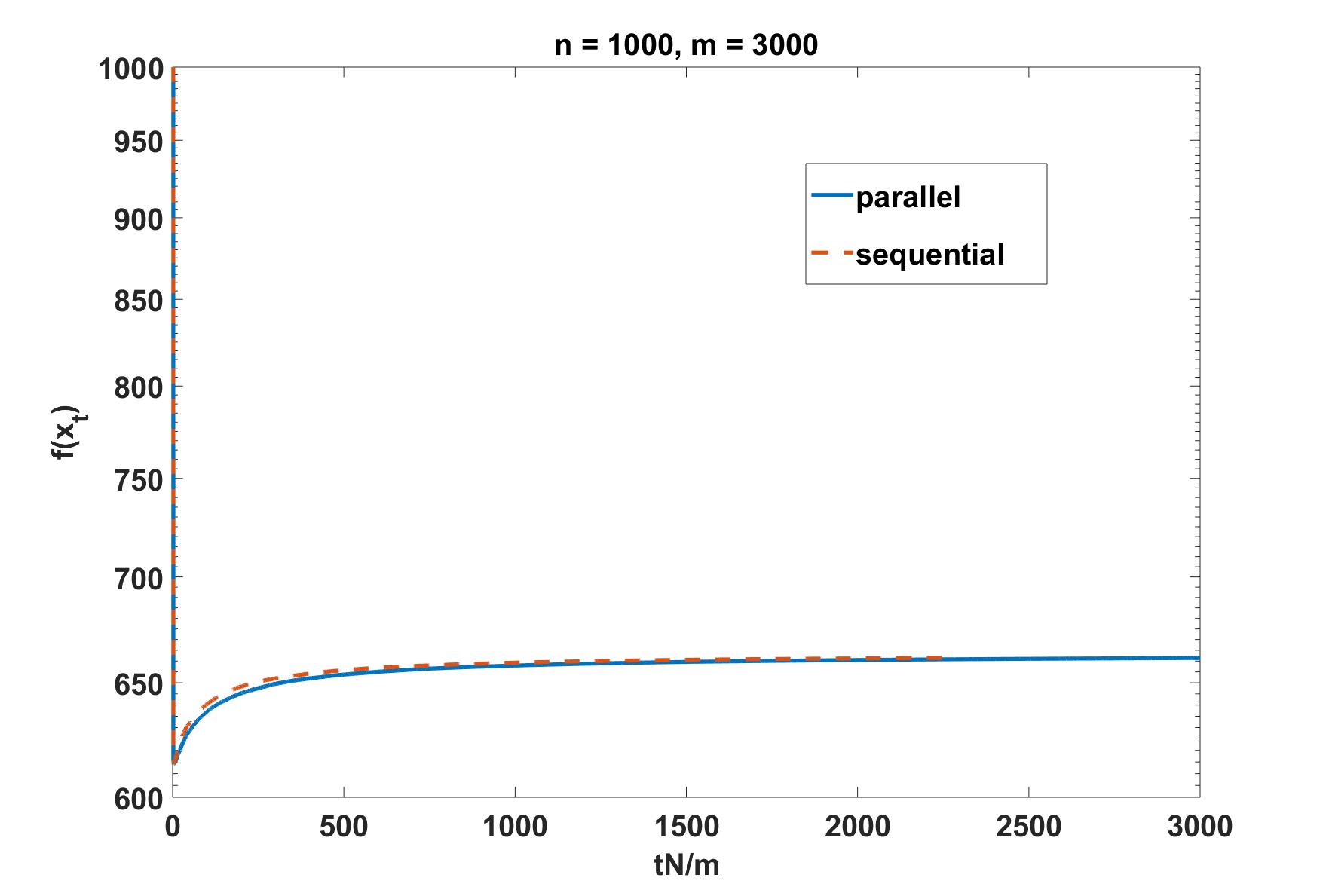}
\hspace*{-0.5cm}
\includegraphics[height=5cm,width=6.2cm]{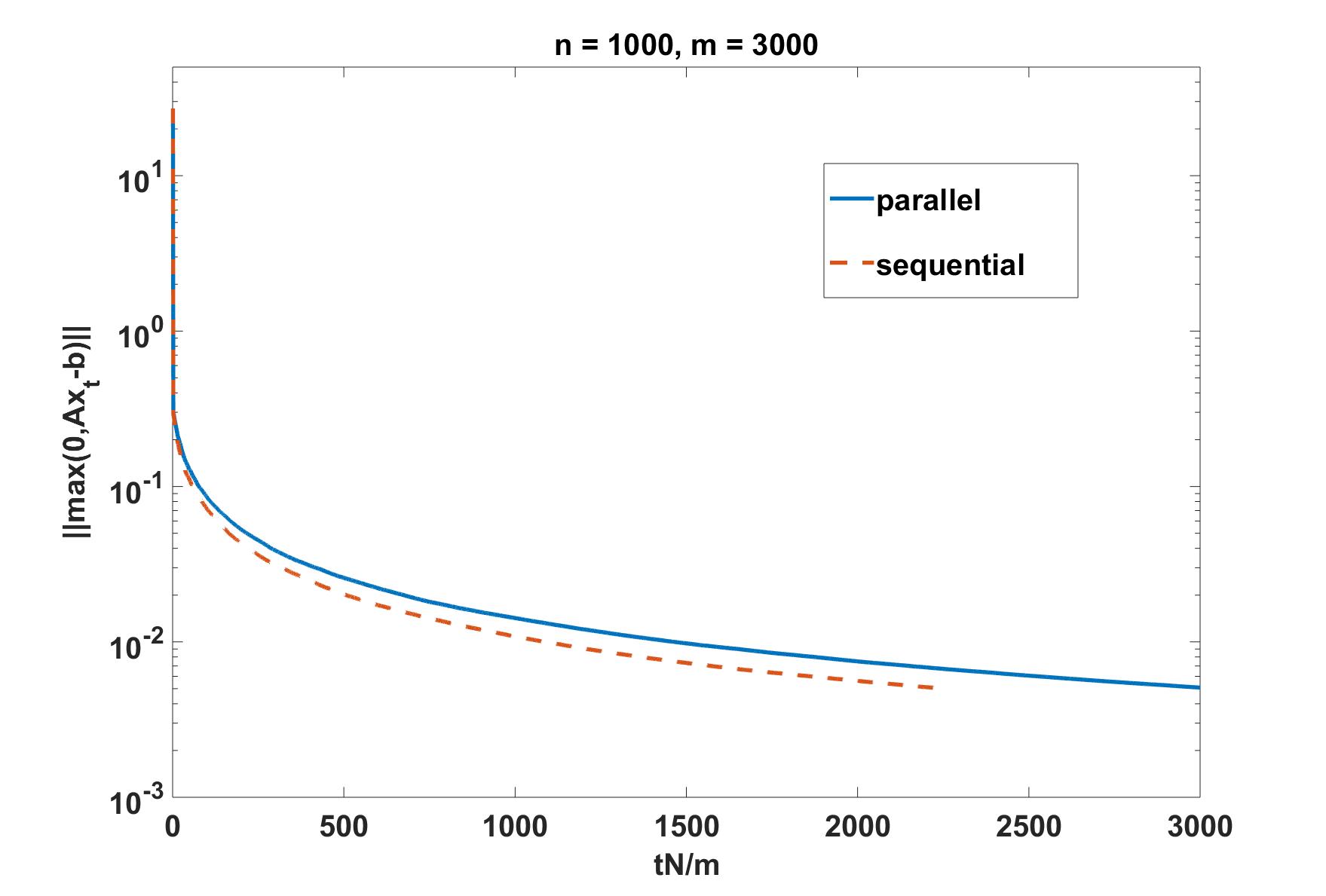}
\caption{Convergence behavior  of the minibatch  parallel (solid) and sequential (dashed) algorithms: objective function (left)  and  feasibility violation (right). }
\end{figure}

\noindent Finally, we   compare  the parallel algorithm \eqref{eq-method} based on our extrapolated  stepsize $\b=1.9/L_N$ and  a variant with fixed stepsize $\b=1.9$. The results on a constrained Lasso problem of dimension $n=10^3$ and minibatch size  $N=10$ are displayed in Fig. 3:  suboptimality (left) and feasibility violation (right). We observe that extrapolation  $\b = 1.9/L_N > 2$ accelerates  substantially the parallel algorithm in terms of feasibility criterion.  Note also that all the plots  show a $\mathcal{O}(1/t)$ rate for the average sequence in the feasibility criterion, thus supporting our theoretical findings. 
\begin{figure}
\label{fig_lcth} 
\includegraphics[height=5cm,width=6.2cm]{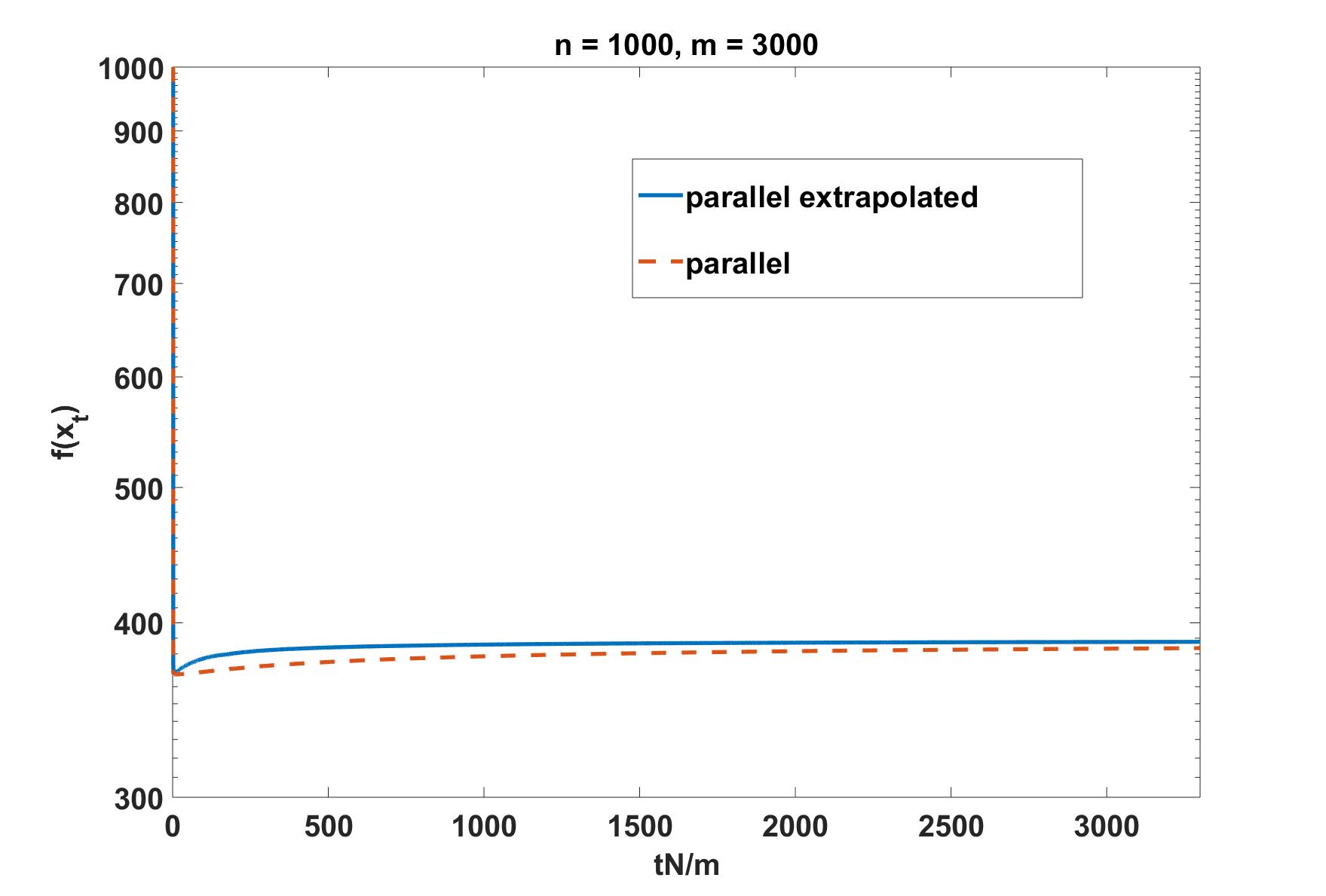}
\hspace*{-0.5cm}
\includegraphics[height=5cm,width=6.2cm]{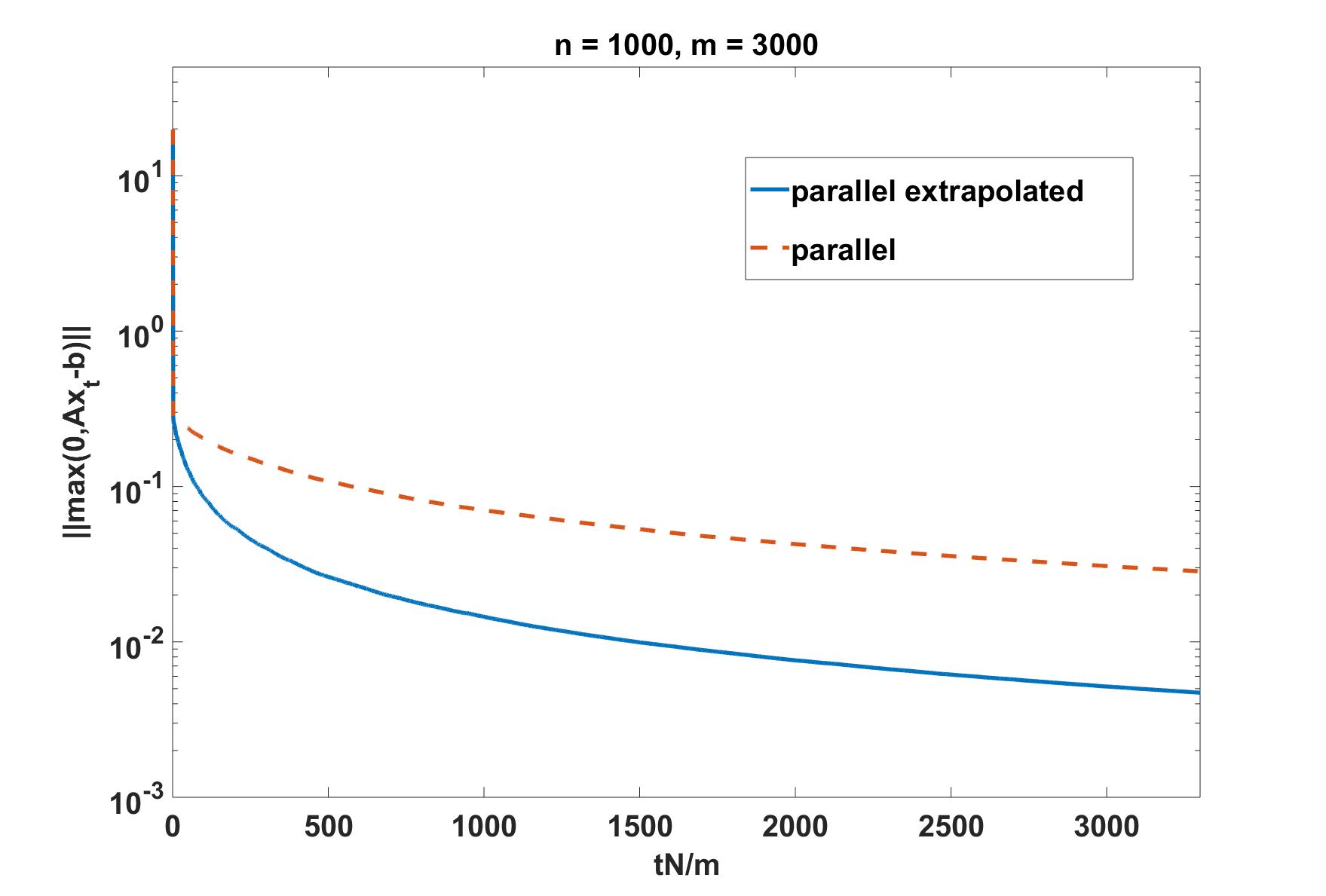}
\caption{Behavior  of  parallel  algorithm with extrapolated stepsize  $\b=1.9/L_N$ (solid) and  fixed stepsize $\b=1.9$ (dashed): objective function (left), feasibility violation (right). }
\end{figure}


\section{Conclusions}
In this paper we have considered  (non-smooth) convex optimization problems with (possibly) infinite intersection of constraints. For solving this general class of convex  problems  we  have proposed   subgradient  algorithms with random minibatch feasibility steps. At each iteration, our algorithms take first  a step for minimizing the objective function and then a subsequent step   minimizing the feasibility violation of the observed minibatch of constraints. The feasibility updates were performed based on either  parallel or sequential random observations of several  constraint components. For a diminishing stepsize and for  strongly convex objective functions, we have proved sublinear convergence rates   for the expected distances of the weighted averages of the iterates from the constraint set, as well as for the expected suboptimality of the function values along the weighted averages.  Our convergence  rates  are  optimal for  subgradient methods  with random feasibility steps for solving this class of non-smooth convex problems. Moreover,  the rates  depend explicitly on the minibatch size. From our knowledge, this work is the first deriving conditions when minibatching works for subgradient methods with random minibatch  feasibility updates and proving how better  is their complexity compared to  the non-minibatch  variants. Finally, our convergence analysis shows that for  the sequential algorithm minibatching always helps and the feasibility  estimate  depends exponentially  on the minibatch size, while  for the parallel algorithm we proved that minibatching works only when some parameter of the optimization problem is strictly less than 1. The numerical results also support the convergence results.


\begin{acknowledgements}
This research was supported by the National Science Foundation under CAREER grant CMMI 07-42538 and  by the Executive Agency for Higher Education, Research and Innovation Funding (UEFISCDI), Romania,  PNIII-P4-PCE-2016-0731, project ScaleFreeNet, no. 39/2017.
\end{acknowledgements}



\begin{thebibliography}{100}

\bibitem{BlaHer:06}
D. Blatt and A.O. Hero, \textit{Energy based sensor network source localization via projection onto convex sets}, IEEE Transactions on Signal Processing, 54(9): 3614--3619, 2006.

\bibitem{BauBor:96}
H. Bauschke and J. Borwein, \textit{On projection algorithms for solving convex feasibility problems}, SIAM Review 38(3): 367--376, 1996.

\bibitem{BotHen:13}
R.I.  Bot  and C. Hendrich, \textit{A double smoothing technique for solving unconstrained nondifferentiable convex optimization problems}, Computational Optimization and Applications,  54(2): 239--262, 2013.


\bibitem{BotHei:12}
R.I.  Bot and T. Hein, \textit{Iterative regularization with general penalty term - theory and application to $L_1$ and TV regularization}, Inverse Problems,  28(10): 1--19, 2012.

\bibitem{BurFer:93}
J. Burke and M. Ferris, \emph{Weak sharp minima in mathematical programming},
SIAM Journal of  Control and Optimization, 31(6): 1340--1359, 1993.

\bibitem{BiaHac:17}
P. Bianchi, W. Hachem, and A. Salim,  {\em A constant step forward-backward algorithm involving random maximal monotone operators}, arxiv preprint (arXiv:1702.04144), 2017.

\bibitem{Ber:11}
D.P. Bertsekas,  {\em Incremental proximal methods for large scale convex optimization},  Mathematical Programming, 129(2): 163--195, 2011.

\bibitem{FerNec:19}
O. Fercoq, A. Alacaoglu, I. Necoara and V. Cevher, {\em Almost surely constrained convex optimization}, International Conference on Machine Learning (ICML),  2019. 

\bibitem{KeyZho:16}
K.  Keys, H. Zhou and K. Lange,  \textit{Proximal Distance Algorithms: Theory and Practice},  Journal of Machine Learning Research, 20(66): 1--38, 2019.

\bibitem{KunBac:18}
A. Kundu, F. Bach, and C. Bhattacharya, {\em Convex optimization over inter-section of simple sets: improved convergence rate guarantees via an exact penalty approach},  International Conference on Artificial Intelligence and Statistics, 2018.

\bibitem{LewPan:98}
A. Lewis and J. Pang, \textit{Error bounds for convex inequality systems}, in  Generalized Convexity, Generalized Monotonicity (J. Crouzeix, J. Martinez-Legaz, and M. Volle, eds.),
Cambridge University Press, 75--110, 1998.

\bibitem{MouBac:11}
E. Moulines and F.R. Bach, \textit{Non-asymptotic analysis of stochastic approximation algorithms for machine learning}, Advances in Neural Information Processing Systems, 2011.

\bibitem{NecNes:15}
I. Necoara, Yu. Nesterov and F. Glineur, \textit{Linear convergence
of first  order methods for non-strongly convex optimization}, Mathematical Programming,  175(1-2): 69--107, 2018.

\bibitem{NecRic:18}
I. Necoara, P. Richtarik and A. Patrascu, \textit{Randomized projection methods for convex feasibility problems: conditioning and convergence rates}, SIAM Journal on Optimization, in press,  2019.

\bibitem{NecNed:19}
I. Necoara and A. Nedi\'c, \textit{Minibatch stochastic subgradient-based projection algorithms for solving  convex inequalities}, arxiv preprint, 2019.  


\bibitem{Ned:10}
A. Nedi\'c, \emph{Random projection algorithms for convex set intersection Problems}, IEEE Conference on Decision and Control, 7655--7660, 2010.

\bibitem{Ned:11}
A. Nedi\'c,  \textit{Random algorithms for convex minimization problems}, Mathematical Programming, 129(2): 225--273, 2011.

\bibitem{NemJud:09}
A. Nemirovski, A. Juditsky, G. Lan and A. Shapiro, \textit{Robust stochastic approximation approach to stochastic programming}, SIAM J. on  Optimization, 19(4):1574--1609, 2009.

\bibitem{NemYud:83}
A. Nemirovski and  D. Yudin,  { \em Problem Complexity and Method Efficiency in Optimization}, John Wiley \& Sons, 1983.

\bibitem{Nes:04}
Y. Nesterov, {\em Introductory Lectures on Convex Optimization:  A Basic Course}, Kluwer, Boston, 2004.

\bibitem{OuyGra:12}
H. Ouyang and A. Gray, {\em Stochastic smoothing for nonsmooth minimizations: Accelerating SGD by exploiting structure},  arxiv preprint (arXiv:1205.4481), 2012.

\bibitem{PatNec:17}
A. Patrascu and I. Necoara, \textit{Nonasymptotic convergence of stochastic proximal point algorithms for constrained convex optimization}, Journal of Machine Learning Research, 18(198): 1--42,  2018.


\bibitem{Pol:67}
 B.T.  Polyak,  {\em A general method of solving extremum problems}, Doklady Akademii Nauk USSR, 174(1), 1967. 

\bibitem{Pol:69}
B.T.  Polyak,  {\em Minimization of unsmooth functionals}, USSR Computational Mathematics and Mathematical Physics, 9: 14--29, 1969.

\bibitem{Pol:01}
B. Polyak, \emph{Random Algorithms for Solving Convex Inequalities}, Studies in Computational Mathematics, 8: 409--422, 2001. 

\bibitem{PolJud:92}
B.T. Polyak and A. B. Juditsky,  {\em Acceleration of stochastic approximation by averaging}, SIAM Journal on Control and Optimization, 30(4): 838--855, 1992.

\bibitem{RocUry:00}
R.T.  Rockafellar and S.P. Uryasev, \emph{Optimization of conditional value-at-risk}, Journal of Risk, 2: 21--41, 2000. 

\bibitem{SheTeb:14}
R. Shefi and M. Teboulle, {\em Rate of Convergence Analysis of Decomposition Methods Based on the Proximal Method of Multipliers for Convex Minimization}, SIAM Journal on Optimization, 24(1):  269--297, 2014.

\bibitem{TraFer:18}
Q. Tran-Dinh, O. Fercoq, and V. Cevher,  {\em A smooth primal-dual optimization framework for nonsmooth composite convex minimization}, SIAM Journal on Optimization, 28(1): 96--134, 2018.

\bibitem{Vap:98}
V. Vapnik, \textit{Statistical learning theory}, John Wiley, 1998.


\bibitem{WanChe:15}
M. Wang, Y. Chen, J. Liu and Y. Gu, {\em  Random multiconstraint projection: stochastic gradient methods for convex optimization with many constraints},  arxiv preprint (arxiv:1511.03760), 2015.

\end{thebibliography}
\end{document}